 \newtheorem{theorem}{Theorem}[section]
 \newtheorem{cor}[theorem]{Corollary}
 \newtheorem{lem}[theorem]{Lemma}{\rm}
 \newtheorem{prop}[theorem]{Proposition}
 \newtheorem{defn}[theorem]{Definition}{\rm}
 \newtheorem{ex}{Example}
\numberwithin{equation}{section}
\def\x{\mathbf{x}}
\def\B{\mathbf{B}}
\def\R{\mathbb{R}}
\def\N{\mathbb{N}}
\def\Q{\mathbf{Q}}
\def\M{\mathbf{M}}
\def\C{\mathbb{C}}
\def\bI{\mathbf{I}}
\def\P{\mathbf{P}}
\def\Z{\mathbb{Z}}
\def\A{\mathbf{A}}
\def\B{\mathbf{B}}
\def\I{\mathbf{I}}
\def\G{\mathbf{G}}
\def\z{\mathbf{z}}
\def\v{\mathbf{v}}
\def\y{\mathbf{y}}
\def\bI{\mathbf{I}}
\def\y{\mathbf{y}}
\def\g{\mathbf{g}}
\def\u{\mathbf{u}}
\def\p{\mathbf{p}}
\def\blambda{\mathbf{\lambda}}
\def\s{\mathcal{S}}
\def\n{\mathcal{N}}
\def\vol{{\rm vol}\,}
\def\hom{\mathbf{Hom}_{d}}
\def\homq{\mathbf{Hom}^q_d}
\title{unit  balls of constant volume: which one has optimal representation?}
\begin{document}
\author{Jean B. Lasserre}
\address{LAAS-CNRS and Institute of Mathematics\\
University of Toulouse\\
LAAS, 7 avenue du Colonel Roche\\
31077 Toulouse C\'edex 4, France\\
Tel: +33561336415}
\email{lasserre@laas.fr}

\date{}
\begin{abstract}
In the family of unit balls with constant volume we look at
the ones  whose algebraic representation
has some extremal property.
We consider the family of nonnegative homogeneous polynomials of even degree $d$ whose sublevel set
$\G=\{\x: g(\x)\leq 1\}$ (a unit ball) has same fixed volume and want to find in this family the one 
that minimizes either the $\ell_1$-norm or the $\ell_2$-norm
of its vector of coefficients. Equivalently, among all degree-$d$ polynomials 
of constant $\ell_1-$ or $\ell_2$-norm, which one minimizes the volume of its level set $\G$.
We first show that in both cases this is a convex optimization problem
with a unique optimal solution $g^*_1$ and $g^*_2$ respectively. We also show that 
$g^*_1$ is the $L_p$-norm polynomial $\x\mapsto\sum_{i=1}^n x_i^{p}$,
thus recovering a parsimony property of the $L_p$-norm via
$\ell_1$-norm minimization. (Indeed $n=\Vert g^*_1\Vert_0$ is the minimum number 
of non-zero coefficient for $\G$ to have finite volume.)
This once again illustrates the power and versatility of the $\ell_1$-norm relaxation
strategy in optimization when one searches for an optimal solution with parsimony properties.
Next we show that $g^*_2$ is not sparse at all (and so differs from $g^*_1$) but is still a sum of $p$-powers of linear forms. We also characterize the 
unique optimal solution of the same problem where one searches for an SOS homogeneous polynomial
that minimizes the trace of its associated (psd) Gram matrix, hence 
aiming at finding a solution which is a sum of a few squares only.
Finally, we also extend these  results to generalized homogeneous polynomials,
which includes $L_p$-norms when $0<p$ is rational.
\end{abstract}

\maketitle

\section{Introduction}

It is well-known that the shape of the Euclidean unit ball
$\B_2=\{\,\x:\sum_{i=1}^nx_i^2\leq1\,\}$
has spectacular geometric properties with respect to other shapes. For
instance, the sphere has the smallest surface area among all surfaces enclosing a given volume and it encloses the largest volume among all closed surfaces with a given surface area; Hilbert and Cohn-Vossen \cite{hilbert} even describe eleven geometric properties of the sphere!

But $\B_2$ has also another spectacular (non-geometric) property related to its algebraic representation which is 
obvious even to people with a little background in Mathematics: 
Namely, its defining polynomial 
$\x\mapsto g_2(\x):=\sum_{i=1}^nx_i^2$ cannot be simpler!! Indeed, among all nonnegative quadratic  homogeneous polynomials
$\x\mapsto g(\x)=\sum_{i\leq j}g_{ij}x_ix_j$ that define a bounded ball $\{\,\x:g(\x)\leq 1\,\}$,
$g_2$ is the one that minimizes the ``cardinality norm" $\Vert g\Vert_0:=\#\{\,(i,j):g_{ij}\neq0\,\}$ (which actually is not a norm).
Only $n$ coefficients of $g_2$ do not vanish and there cannot be less than $n$ non zero coefficients to define a bounded ball $\{\,\x: g(\x)\leq1\,\}$. The same is true for the $d$-unit ball $\B_d=\{\,\x:\sum_{i=1}^nx_i^d\leq1\,\}$ and its defining polynomial
$\x\mapsto g_d(\x)=\sum_ix_i^d$ for any even integer $d$,
when compared to any other nonnegative homogeneous polynomial $g$ of degree $d$ whose 
sublevel set $\{\,\x:g(\x)\leq1\,\}$ has finite Lebesgue volume. Indeed, again $\Vert g_d\Vert_0=n$, i.e.,
out of potentially ${n+d-1\choose d}$ coefficients only $n$ do not vanish! In other words,
\begin{equation}
\label{norm-ell0}
g_d\,=\,\displaystyle\arg\min_g\,\{\,\Vert g\Vert_0:\: {\rm vol}\,(\{\x:g(\x)\leq1\,\})\,\leq\,1\,\}\end{equation}
where the minimum is taken over all homogeneous polynomials of degree $d$. 

So an natural question which arises is as follows:
{\it In view of the many ``geometric properties" of the unit ball $\B_d$, is 
the  ``algebraic sparsity" of its representation $\{\x:\sum_ix_i^d\leq1\,\}$ a coincidence or does it also corresponds 
to a certain extremal property on all possible representations?}

So we are interested in the following optimization problem in computational geometry
and with an algebraic flavor. 

{\it Given an even integer $d$, determine the nonnegative homogeneous polynomial
$g^*$ of degree $d$ whose $\ell_1$-norm $\Vert g^*\Vert_1$ (or $\ell_2$-norm $\Vert g^*\Vert_2$)
of its vector of coefficients is minimum among all degree-$d$ nonnegative homogeneous polynomials
with same (fixed) volume of their sublevel set $\G=\{\,\x:g(\x)\leq 1\,\}$. That is, solve:}
\begin{equation}
\label{relax-0-1}
\inf_{g}\,\{\,\Vert g\Vert_{p=1,2}:\: {\rm vol}\,(\G)\,=\,1\,;\:\mbox{$g$ homogeneous of degree $d$}\,\}.\end{equation}
In particular, {\it Can the parsimony property
of the $L_d$-unit balls be recovered from (\ref{relax-0-1}) with the $\ell_1$-norm $\Vert g\Vert_1$
(instead of minimizing the nasty function $\Vert \cdot\Vert_0$ in (\ref{norm-ell0}))?}\\

By homogeneity, this problem also has the equivalent formulation:  {\it Among all homogeneous polynomials
$g$ of degree $d$ and with constant norm $\Vert g\Vert_1=1$ (or $\Vert g\Vert_2=1$) find the one 
with level set $\G$ of minimum volume.}\\

One goal of this paper is to prove that (\ref{relax-0-1}) is a convex optimization problem with a {\it unique optimal solution},
which is the same as $g_d$ in (\ref{norm-ell0}) when one minimizes the $\ell_1$-norm $\Vert g\Vert_1$.
In addition $g_d$ cannot be an optimal solution of (\ref{relax-0-1}) when one minimizes 
the $\ell_2$-norm $\Vert g\Vert_2$ (except when $d=2$).
This illustrates in this context of  computational geometry that again,
the sparsity-induced $\ell_1$-norm does a perfect job
in the relaxation (\ref{relax-0-1}) (with $\Vert\cdot\Vert_1$) of problem (\ref{norm-ell0}) with $\Vert \cdot\Vert_0$.
This convex ``relaxation trick" 
in (non convex) $\ell_0$-optimization has been used successfully in several important applications; see e.g. 
Cand\`es et al. \cite{candes}, Donoho \cite{donoho1}, Donoho and Elad \cite{donoho2}
in compressed sensing applications and Recht et al. \cite{recht} for matrix applications (where the 
small-rank induced {\it nuclear norm} is the matrix analogue of the $\ell_1$-norm). For more details on optimization with sparsity constraints and/or sparsity-induced penalties,
the interested reader is referred to Beck and Eldar \cite{beck} and Bach et al. \cite{bach}.\\

To address our problem we consider the following framework: 
Let $\hom\subset\R[\x]_{d}$ be the vector space of homogeneous polynomials of even degree $d$,
and given $g\in\hom$, let $\g=(g_\alpha)$ be its vector of coefficients, i.e.,
\[\x\mapsto g(\x)\,=\,\sum_\alpha g_\alpha\,\x^\alpha \:\left(=\sum_{\alpha}g_\alpha\,\x_1^{\alpha_1}\cdots \x_n^{\alpha_n}\right),\qquad \sum_i\alpha_i=d,\]
with standard $\ell_1$-norm $\Vert g\Vert_1=\vert \g\vert=\sum_{\alpha}\vert g_\alpha\vert$. With any 
$g\in\hom$ is associated its sublevel set $\G\subset\R^n$ defined by:
\begin{equation}
\label{set-G}
\G\,:=\,\{\,\x\in\R^n\::\: g(\x)\,\leq\,1\,\},\qquad g\in\hom.
\end{equation}
In particular, with $\x\mapsto g^*(\x):=\sum_{i=1}^nx_i^{d}$, the sublevel set $\G^*$ is nothing less than
the standard $d$-unit ball
\[\B_{d}\,=\,\{\,\x:\sum_{i=1}^nx_i^{d}\leq1\,\}\,=:\,\{\,\x:\Vert \x\Vert^{d}_{d}\,\leq1\,\},\]
whose Lebesgue volume ${\rm vol}\,(\B_d)$ is denoted $\rho_d$.
(When $g\in\hom$ is convex then $\x\mapsto g(\x)$ defines a norm $\Vert\x\Vert_g:=g(\x)^{1/d}$ with $\G$ as associated unit ball.)

\subsection*{Contribution} 

(a) In a first contribution we prove that 
the optimization problem:
\begin{equation}
\label{pb1-intro}
\P_1:\quad \inf_g\,\{\,\Vert g\Vert_1\,:\: {\rm vol}\,(\G)\,\leq\,\rho_d\,;\quad g\in\hom\,\},\end{equation}
has a unique optimal solution $g^*_1$ which is the $L_d$-norm polynomial $\x\mapsto \Vert\x\Vert_{d}^{d}$.
Observe that $g^*_1$ has the minimal number $n$ of coefficients 
over potentially $s(d):={n-1+d\choose d}$  coefficients. (Indeed for a polynomial $g\in\hom$
with $m<n$ non zero coefficients, its sublevel set $\G$ cannot have finite Lebesgue volume.)
Therefore the $L_d$-norm polynomial $g^*_1$ associated with the unit ball $\B_d$ is the ``sparsest" solution among all $g\in\hom$ such that $\vol(\G)\leq\vol(\B_d)$.
In particular, $g^*_1$ not only solves problem $\P_1$ but also solves the non convex optimization problem
\[\P_0:\quad \inf_g\,\{\,\Vert g\Vert_0\,:\: {\rm vol}\,(\G)\,\leq\,\rho_d\,;\quad g\in\hom\,\},\]
of which $\P_1$ is a ``convex relaxation". But this is also equivalent to state that among all
homogeneous polynomials of degree $d$ with constant $\ell_1$-norm,
the $L_d$-unit ball is the one with minimum volume ${\rm vol}\,(\B_d)\leq{\rm vol}(\G)$.

(b) In a second contribution we consider the $\ell_2$-norm version of (\ref{pb1-intro}):
\begin{equation}
\label{pb2-intro}
\P_2:\quad \inf_g\,\{\,\Vert g\Vert_2\,:\: {\rm vol}\,(\G)\,\leq\,\rho_d\,;\quad g\in\hom\,\},\end{equation}
with weighted Euclidean norm $g\mapsto \Vert g\Vert_2$ defined by:
\[\Vert g\Vert_2^{2}\,:=\,\sum_{\vert\alpha\vert=d} c_\alpha\,g_\alpha^2,\qquad g\in\hom,\quad
\mbox{where }c_\alpha:=\frac{(d){\rm !}}{\alpha_1{\rm !}\cdots\alpha_n{\rm !}}.\]
We then show that $\P_2$ also has a unique optimal solution $g^*_2$,
but in contrast to the optimal solution $g^*_1$ of problem $\P_1$, $g^*_2$ is not sparse at all! This is because
one can show that all ${n-1+d\choose d}$ coefficients of the form $g^*_{2\beta}$ with $\vert\beta\vert=d/2$ are non-zero.
In addition,
$g^*_2$ is a particular sum of squares (SOS) polynomial as it is a sum of $d$-powers of linear forms.
(Notice that $g^*_1$ is also a (very particular and simple) sum of $d$-powers of linear forms.) In particular, when $d=4$
the optimal solution of $\P_2$ is the Euclidean ball $\{\x:\sum_ix_i^2\leq1\}$ which has the equivalent quartic representation
$\{\x:(\sum_{i=1}^nx_i^2)^2\leq1\}$ and the SOS quartic polynomial $\x\mapsto (\sum_{i=1}^nx_i^2)^2$ solves $\P_2$.\\

(c) We also consider the SOS (sum of squares) version of $\P_1$, that is one now searches for
a degree-$d$ SOS homogeneous polynomial $g_\Q(\x)=\v_{d/2}(\x)\Q\v_{d/2}(\x)$, $\Q\succeq0$,
(where $\v_{d/2}(\x)=(\x^\alpha)$, $\vert\alpha\vert=d/2$). That is, one characterizes the unique optimal 
solution of the optimization problem:
\begin{equation}
\label{pb3-intro}
\P_3:\quad \inf_{\Q\succeq0}\,\{\,{\rm trace}\,(\Q)\,:\: {\rm vol}\,(\G_{\Q})\,\leq\,\rho_d\,;\: \Q\succeq0\,\}.\end{equation}
In this matrix context, ${\rm trace}\,(\Q)$ is the {\it nuclear norm} of $\Q$ and so
solving $\P_3$ aims at finding an optimal solution $\Q^*$ with small rank, which translates into an homogeneous
polynomial  $g_{\Q^*}$  which is a sum of a few squares. We also proves that
$g^*_1$ associated with the $L_d$-unit ball cannot be an optimal solution of $\P_3$ (and indeed
$g_{\Q^*}$ being a sum of a few squares does not necessarily implies
that it has a small number of coefficients).

(d) Finally we also show that results in (a) and (b) extend to the case of 
other values of $d$ (including $p=1$ and rationals) in which case one now deals with positively homogeneous 
``generalized polynomials" (instead of homogeneous polynomials) and one has to define an appropriate finite-dimensional
analogue analogue of $\hom$. This includes the interesting case of 
the $L_1$-unit ball $\{\,\x:\sum_i\vert x_i\vert\leq1\,\}$ and when $p<1$, balls which are not associated with norms.

\section{Notation, definitions and preliminary results}
\label{notation}
Let $\R[\x]$ denote the ring or real polynomials in the variables $\x=(x_1,\ldots,x_n)$, and 
let $\R[\x]_d$ be the vector space of real polynomials of degree at most $d$.
Similarly, let $\Sigma[\x]\subset\R[\x]$ denote the convex cone of real polynomials that are sums of squares (SOS) of polynomials,
and $\Sigma[\x]_d\subset\Sigma[\x]$ its subcone of SOS polynomials of degree at most $d$.
Denote by $\s^m$ the space of $m\times m$ real symmetric matrices. For a given matrix $\A\in\s^m$, the notation
$\A \succeq 0$ (resp. $\A\succ0$) means that $\A$ is positive semidefinite (psd) (resp. positive definite (pd)), i.e., all its eigenvalues 
are real and nonnegative (resp. positive). 

A polynomial $p\in\R[\x]_d$ is homogenous if $p(\lambda \x)=\lambda^d p(\x)$ for all $\x\in\R^n$, $\lambda\in\R$.
A function $f:\R^n\to\R$ is positively homogeneous of degree $d\in\R$ if $f(\lambda\x)=\lambda^df(\x)$
for all $0\neq\x\in\R^n$, $\lambda>0$. For instance $x\mapsto \vert x\vert$ is not homogeneous but is positively homogeneous of degree $1$.

Let $\hom\subset\R[\x]_{d}$ be the vector space of homogeneous polynomials of even degree $d$, and
let $\N^n_{d}:=\{(\alpha_1,\ldots,\alpha_n)\::\:\sum_i\alpha_i=d\}$.
For an homogeneous polynomial $g\in\R[\x]_{d}$, and
with $s(d):={n-1+d\choose d}$), let  $\g=(g_\alpha)\in\R^{s(d)}$ be its vector of coefficients, i.e.,
\[\x\mapsto g(\x)\,:=\,\sum_{\alpha\in\N^n_{d}}g_\alpha\,\x^\alpha
\:\left(=\sum_{\alpha\in\N^n_{d}}g_\alpha\,x_1^{\alpha_1}\cdots x_n^{\alpha_n}\right).\]
Denote by $\G\subset\R^n$
its associated sublevel set $\G:=\{\x:g(\x)\leq1\}$. 

Let $\P[\x]_d\subset\hom$ be the set of homogeneous polynomials of degree $d$
whose associated level set $\G$ has finite Lebesgue volume. It is a convex cone; see \cite[Proposition 2.1]{lowner}.
Let $f:\hom\to \R_+$ be the function defined by:
\[g\,\mapsto\,f(g)\,:=\,\left\{\begin{array}{ll}
{\rm vol}\,(\G)&\mbox{if $g\in\P[\x]_d$}\\
+\infty&\mbox{otherwise.}\end{array}\right.\]
It is important to realize that the sublevel set $\G$ need not be convex! For instance
Figure \ref{figure1} displays two examples of non convex sets $\G$.
\begin{center} 
\begin{figure}
\label{figure1}
\resizebox{0.9\textwidth}{!}
{\includegraphics{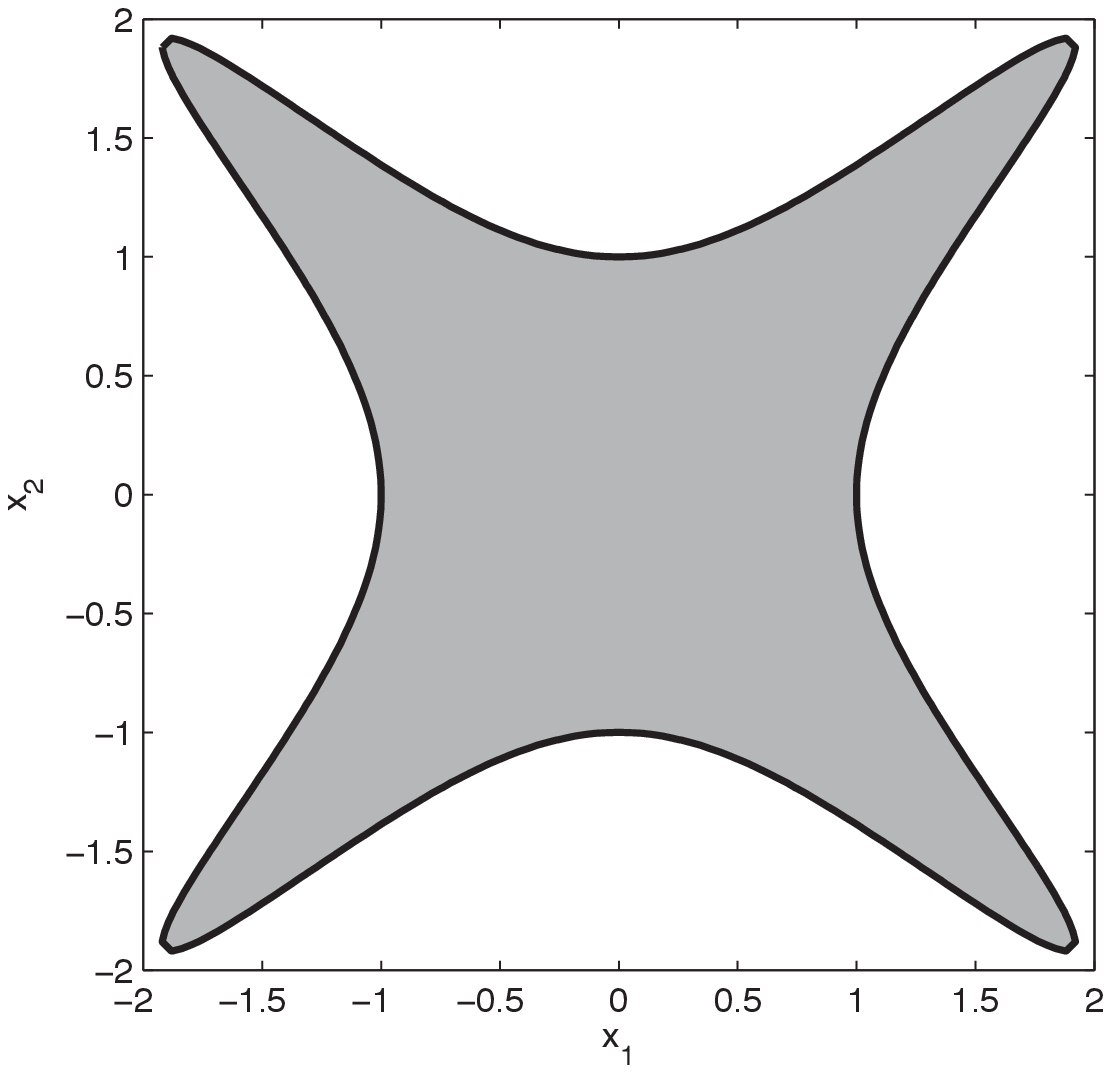}\includegraphics{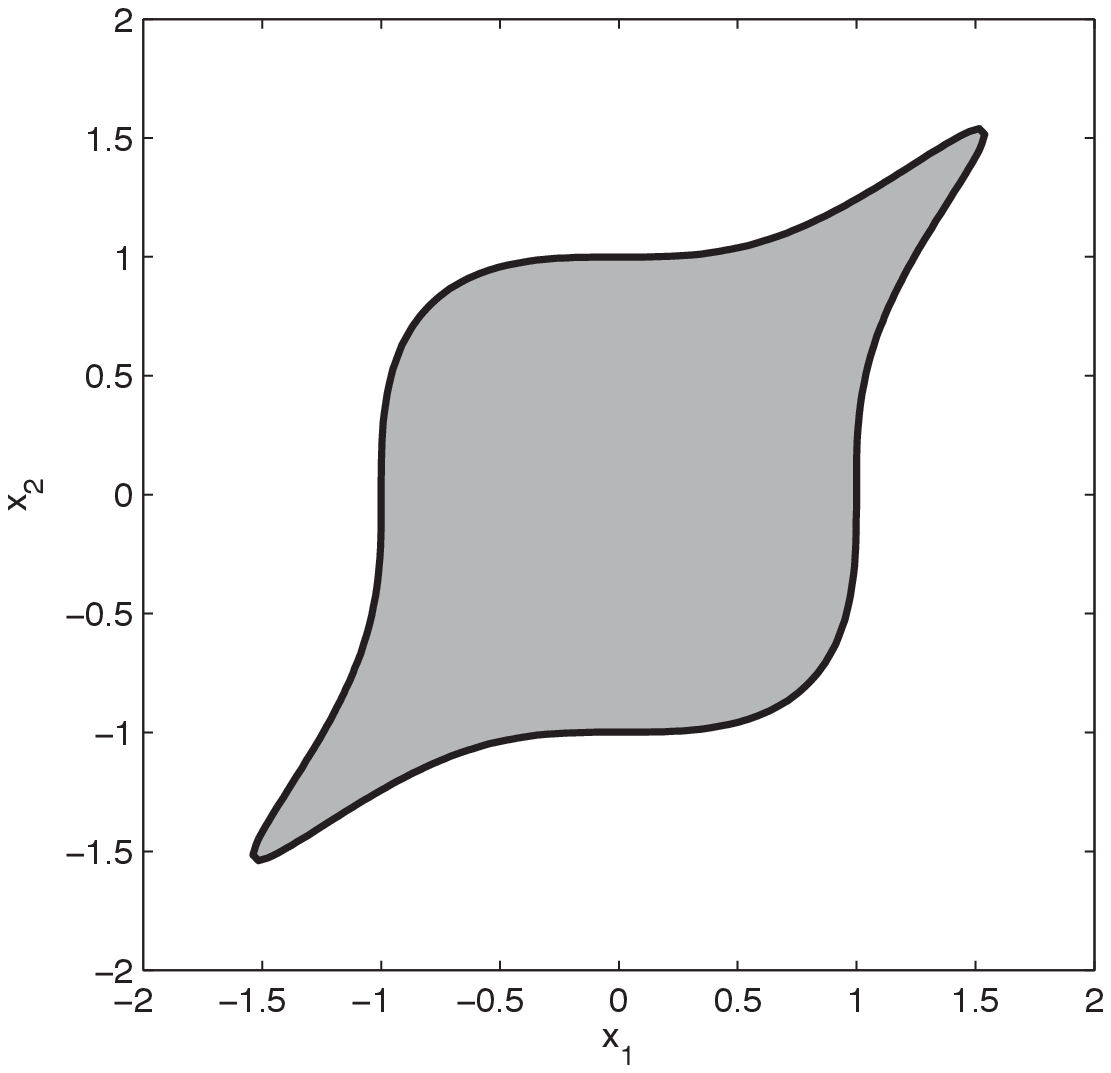}}
\caption{$\G_1$ with $x^4+y^4-1.925\,x^2y^2$ and $x^6+y^6-1.925\,x^3y^3$}
\end{figure}
\end{center}
More generally, for every $\alpha\in\N^n$ define $f_\alpha:\P[\x]_d\to\R$ by
\[g\,\mapsto\,f_\alpha(g)\,:=\,\left\{\begin{array}{ll}
\displaystyle\int_{\G}\x^\alpha\,d\x&\mbox{if $g\in\P[\x]_d$}\\
+\infty&\mbox{otherwise.}\end{array}\right.\]
In particular, $f(g)=f_0(g)$.

\subsection*{A preliminary result}

We will need the following result of independent interest already proved in \cite{lowner} but 
for which we provide a brief sketch. We use the same technique based on Laplace transform
as in Lasserre \cite{monthly} and Lasserre and Zeron \cite{applicationes}  for providing closed form expressions for certain class of integrals.

\begin{theorem}
\label{th1}
Let $h:\R^n\to\R$ be a nonnegative positively homogeneous function of degree $0\neq d\in\R$ such that ${\rm vol}\,(\{\,\x:h(\x)\leq 1\,\})<\infty$.
Then for every $\alpha\in\N^n$:
\begin{equation}
 \label{vol-alpha}
\int_{\{\,\x:h(\x)\leq1\,\}}\x^\alpha\,d\x\,=\,\frac{1}{\Gamma(1+(n+\vert\alpha\vert)/d)}\int_{\R^n}\x^\alpha\,\exp(-h(\x))\,d\x.
\end{equation}
In particular when $d$ is an even integer: For every $g\in\P[\x]_d$,
 \begin{equation}
 \label{vol-exp}
f(g)\,=\,{\rm vol}\,(\G)\,=\,\frac{1}{\Gamma(1+n/d)}\int_{\R^n}\exp(-g(\x))\,d\x,
 \end{equation}
and the function $f$ is nonnegative, strictly convex and homogeneous of degree $-n/d$.
Moreover, if $g\in\,{\rm int}\,(\P[\x]_d)$:

\begin{eqnarray}
\label{relation-1}
\frac{\partial f(g)}{\partial g_\alpha}&=&-\frac{n+d}{d}\,\int_{\G}\x^\alpha\,d\x,\qquad\alpha\,\in\,\N^n_d\\
\label{relation-2}
\int_\G g(\x)\,d\x&=&\frac{n}{n+d}\,\int_{\G}\,d\x.
\end{eqnarray}
\end{theorem}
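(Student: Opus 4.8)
The plan is to deduce every assertion from the single scaling identity \eqref{vol-alpha}, which I would establish first by a Laplace-transform computation, and then read off the remaining claims as corollaries.

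For \eqref{vol-alpha} the engine is the positive homogeneity of $h$: for $t>0$ one has $\{\x:h(\x)\leq t\}=t^{1/d}\{\x:h(\x)\leq1\}$, so the substitution $\x=t^{1/d}\y$ gives
\[\int_{\{h\leq t\}}\x^\alpha\,d\x\,=\,t^{(n+\vert\alpha\vert)/d}\int_{\{h\leq1\}}\y^\alpha\,d\y.\]
Writing $\exp(-h(\x))=\int_{h(\x)}^{\infty}e^{-t}\,dt$ and exchanging the order of integration then yields
\[\int_{\R^n}\x^\alpha\exp(-h(\x))\,d\x\,=\,\int_0^\infty e^{-t}\Big(\int_{\{h\leq t\}}\x^\alpha\,d\x\Big)\,dt\,=\,\Big(\int_{\{h\leq1\}}\x^\alpha\,d\x\Big)\int_0^\infty e^{-t}\,t^{(n+\vert\alpha\vert)/d}\,dt,\]
and since the last integral equals $\Gamma(1+(n+\vert\alpha\vert)/d)$, rearranging gives \eqref{vol-alpha}.

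Taking $\alpha=0$ and $h=g$ in \eqref{vol-alpha} is exactly \eqref{vol-exp}. Nonnegativity of $f$ is immediate, and homogeneity of degree $-n/d$ follows from the substitution $\x=\lambda^{-1/d}\y$ in $\int_{\R^n}\exp(-\lambda g(\x))\,d\x$ (equivalently from $\{\lambda g\leq1\}=\lambda^{-1/d}\G$). For convexity I would fix $\x$ and note that $\g\mapsto g(\x)$ is linear in the coefficient vector $\g$ while $t\mapsto e^{-t}$ is convex, so $\g\mapsto\exp(-g(\x))$ is convex, a property preserved under the integral in \eqref{vol-exp}. Strictness comes from the fact that for distinct $g,g'\in\P[\x]_d$ the nonzero polynomial $g-g'$ satisfies $g(\x)\neq g'(\x)$ off a set of measure zero; on the complementary positive-measure set the strict convexity of $t\mapsto e^{-t}$ gives a strict pointwise inequality that integrates to a strict inequality for $f$.

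For $g\in\mathrm{int}\,\P[\x]_d$ I would differentiate \eqref{vol-exp} under the integral sign to get $\partial f/\partial g_\alpha=-\Gamma(1+n/d)^{-1}\int_{\R^n}\x^\alpha e^{-g(\x)}\,d\x$, then invoke \eqref{vol-alpha} with $\vert\alpha\vert=d$ together with $\Gamma(1+(n+d)/d)=\frac{n+d}{d}\Gamma(1+n/d)$ to obtain \eqref{relation-1}. Finally \eqref{relation-2} follows from Euler's identity for the degree $(-n/d)$-homogeneous function $f$, namely $\sum_\alpha g_\alpha\,\partial f/\partial g_\alpha=-\frac{n}{d}f(g)$: substituting \eqref{relation-1} and $\sum_\alpha g_\alpha\x^\alpha=g(\x)$ turns the left-hand side into $-\frac{n+d}{d}\int_\G g\,d\x$, whence $\int_\G g\,d\x=\frac{n}{n+d}\vol(\G)$. (One can also read \eqref{relation-2} directly off the layer-cake computation $\int_\G g\,d\x=\int_0^1 s\,d(s^{n/d}\vol(\G))$.)

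The only genuinely delicate point, and hence the part I would treat most carefully, is the integrability needed to license the Fubini interchange in \eqref{vol-alpha} and the differentiation under the integral sign, especially since $\{h\leq1\}$ may be unbounded while still having finite volume. The clean remedy is to run the identical scaling-plus-Tonelli argument with $\vert\x^\alpha\vert$ in place of $\x^\alpha$: this produces an identity between nonnegative (a priori possibly infinite) quantities showing that $\int_{\{h\leq1\}}\vert\x^\alpha\vert\,d\x<\infty$ is equivalent to finiteness of the companion Gaussian-type integral, so that the hypothesis $\vol(\{h\leq1\})<\infty$ (the case $\alpha=0$) bootstraps all the finiteness required. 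On $\mathrm{int}\,\P[\x]_d$ the sublevel set $\G$ is moreover bounded, so every polynomial-weighted integral converges and the interchange of derivative and integral is justified.
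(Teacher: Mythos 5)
Your proposal is correct, and it rests on the same two ingredients as the paper's proof --- an interchange of the order of integration between the level parameter and $\x$, and the degree-$d$ scaling of $h$ --- but you deploy them in a different order, which genuinely streamlines the argument. The paper computes the Laplace transform $\mathcal{L}[v_\alpha](\lambda)$ of $y\mapsto v_\alpha(y)=\int_{\{h\leq y\}}\x^\alpha\,d\x$ for all $\lambda$, recognizes it as the transform of $c\,y^{(n+\vert\alpha\vert)/d}$, and then needs the identity theorem for analytic functions to invert the transform and conclude $v_\alpha(y)=c\,y^{(n+\vert\alpha\vert)/d}$ before setting $y=1$. You instead obtain $v_\alpha(t)=t^{(n+\vert\alpha\vert)/d}\,v_\alpha(1)$ directly from the substitution $\x=t^{1/d}\y$, after which the layer-cake identity $e^{-h(\x)}=\int_{h(\x)}^{\infty}e^{-t}\,dt$ and Tonelli yield (\ref{vol-alpha}) using only the single evaluation $\int_0^\infty e^{-t}\,t^{(n+\vert\alpha\vert)/d}\,dt=\Gamma(1+(n+\vert\alpha\vert)/d)$; no transform inversion or analytic continuation is required. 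You also supply the convexity, strict-convexity and differentiation arguments that the paper outsources to \cite{lowner}, and your observation that ${\rm int}\,(\P[\x]_d)$ consists of positive definite forms (so that $\G$ is compact there) is exactly what licenses differentiating under the integral sign. One small caution: for $g$ on the boundary of $\P[\x]_d$ the set $\G$ can be unbounded while having finite volume, and then moments of order $d$ may be infinite, so the claim that $\vol(\{\x:h(\x)\leq1\})<\infty$ ``bootstraps all the finiteness required'' slightly overstates matters; what your Tonelli computation honestly gives in that degenerate case is that (\ref{vol-alpha}) holds as an identity in $[0,+\infty]$ when $\x^\alpha$ is replaced by $\vert\x^\alpha\vert$, while the interior hypothesis covers everything needed for (\ref{relation-1})--(\ref{relation-2}). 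This caveat is shared by the paper's own proof and does not affect any of the applications.
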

\begin{proof}
For $\alpha\in\N^n$, let $v_\alpha:\R_+\to\R$ be the function $y\mapsto v_\alpha(y):=\int_{\{\x:h(\x)\leq\,y\,\}}\x^\alpha\,d\x$.
Observe that $v_\alpha(y)=0$ whenever $y<0$. So let $\mathcal{L}[v_\alpha]:\C\to\C$ be the Laplace transform $\mathcal{L}[v_\alpha]$ of the function $v_\alpha$, i.e.,
\[\lambda\,\mapsto\,\mathcal{L}[v_\alpha](\lambda)\,:=\,\int_0^\infty \exp(-\lambda y)\,v_\alpha(y)\,dy,\qquad \lambda\in\C\,;\:\Re(\lambda)>0.\]
Let $H_\alpha:\R_+\to\R$ be the function $\lambda\mapsto H_\alpha(\lambda):=\mathcal{L}[v_\alpha](\lambda)$, $\lambda\in\R_+$. Then:
\begin{eqnarray*}
H_\alpha(\lambda)&=&\int_0^\infty \exp(-\lambda y)\left(\int_{\{\,\x:h(\x)\leq y\,\}}\x^\alpha\,d\x\right)\,dy\\
&=&\int_{\R^n}\x^\alpha\left(\int_{h(\x)}^\infty \exp(-\lambda y)\,dy\right)\,d\x\\
&=&\frac{1}{\lambda}\int_{\R^n}\x^\alpha\exp(-\lambda h(\x))\,d\x\\
&=&\frac{1}{\lambda^{1+(n+\vert\alpha\vert)/d}}\int_{\R^n}\x^\alpha\exp(-h(\x))\,d\x\quad\mbox{[by homogeneity]}\\
&=&\frac{\Gamma(1+(n+\vert\alpha\vert)/d)}{\lambda^{1+(n+\vert\alpha\vert)/d}}\left(\underbrace{\frac{1}{\Gamma(1+(n+\vert\alpha\vert)/d)}\int_{\R^n}\x^\alpha\exp(-h(\x))\,d\x}_{c}\,\right)\\
&=&c\,\frac{\Gamma(1+(n+\vert\alpha\vert)/d)}{\lambda^{1+(n+\vert\alpha\vert)/d}}.
\end{eqnarray*}
The function $H_\alpha$ is analytic on $\Theta=\{\lambda\in\C:\Re(\lambda) >0\}$ and
coincides with $\mathcal{L}[v_\alpha]$ on $[0,\infty)$. By the identity Theorem on analytic functions
(see e.g. \cite[Theorem III.3.2, p. 125]{freitag}),
$H_\alpha=\mathcal{L}[v_\alpha]$ on $\Theta$. But $H_\alpha=\mathcal{L}[c\,y^{(n+\vert\alpha\vert)/d}]$ which concludes the proof and yields (\ref{vol-alpha}) when $y=1$. 

Next when $g\in{\rm int}\,(\P[\x]_d)$, one obtains (\ref{relation-1}) by differentiating under the integral sign which is 
permitted in this context; see \cite{lowner} for a rigorous proof. This yields
\begin{eqnarray*}
\frac{\partial f(g)}{\partial g_\alpha}&=&\frac{-1}{\Gamma(1+n/d)}\int_{\R^n}\x^\alpha\,\exp(-g(\x))\,d\x,\qquad\alpha\in\N^n_d\\
&=&\frac{-\Gamma(1+(n+d)/d)}{\Gamma(1+n/d)}\int_{\G}\x^\alpha\,d\x,\qquad\alpha\in\N^n_d\\
&=&-\,\frac{n+d}{d}\int_{\G}\x^\alpha\,d\x,\qquad\alpha\in\N^n_d,\end{eqnarray*}
where we have use the identity $\Gamma(z+1)=z\,\Gamma(z)$.
Finally, to get
(\ref{relation-2}) observe that $f$ is a positively homogeneous function of degree $-n/d$ and so Euler's identity 
$\langle \nabla f(g),g\rangle=-n\,f(g)/d$ for homogeneous functions yields:
\[-\frac{n}{d}\int_{\G}d\x\,=\,\langle g,\nabla f(g)\rangle\,=\,-\frac{n+d}{d}\int_{\G}g(\x)\,d\x.\]
\end{proof}

\section{The $\ell_1$-norm formulation}
\label{section-ell-1}
With $d\in\N$ a fixed even integer and $g\in\hom$ written as 
\[\x\mapsto g(\x)\,=\,\sum_{\alpha\in\N^n_d}\,g_\alpha \,\x^\alpha,\qquad\x\in\R^m,\]
let $\B_d\subset\R^n$ be the $L_d$-unit ball and $\rho_d$ its Lebesgue volume, i.e.,
\begin{equation}
\label{vol-unit}
\B_d\,=\,\{\,\x:\sum_{i=1}^nx_i^{d}\,\leq\,1\,\}\quad \mbox{and}\quad \rho_d\,:=\,{\rm vol}\,(\B_d)\,=\,\int_{\B_d}d\x.
\end{equation}
Let $\Vert g\Vert_1:=\sum_{\alpha\in\N^n_d}\vert g_\alpha\vert$ and consider 
the optimization problem $\P_1$:
\begin{equation}
\label{problem-P-222}
\P_1:\quad\inf_g\:\{\,\Vert g\Vert_1: \:f(g)\,=\,\rho_d\,;\quad g\in\hom\,\}.
\end{equation}
That is, among all degree-$d$ homogeneous polynomials $g$ whose
level set $\G$ has same Lebesgue volume as the $L_d$-unit ball $\B_d$,
one seeks the one which minimizes the $\ell_1$-norm
of its coefficients. 

In fact, since $f(\lambda g)=\lambda^{-n/d}f(g)$ one may replace the constraint
$f(g)=\rho_d$ with the inequality constraint $f(g)\leq\rho_d$ and (\ref{problem-P-222}) reads:
\begin{equation}
\label{problem-P-22}
\P_1:\qquad \ell_1^*\,=\,\inf_g\:\{\,\Vert g\Vert_1: \:f(g)\,\leq\,\rho_d\,;\quad g\in\hom\:\}.
\end{equation}
\begin{theorem}
\label{th2-ell1}
Let $d\geq 2$ be an even integer. The $L_d$-norm polynomial
\[\x\mapsto g^*(\x)\,=\,\Vert\x\Vert_d^{d}\,\left(=\,\sum_{i=1}^n x_i^{d}\,\right),\]
is the unique optimal solution of Problem $\P_1$ in (\ref{problem-P-22}) and
moreover,
\begin{equation}
\label{relationship}
{\rm vol}\,(\B_d)=\displaystyle\int_{\G^*}\,d\x\,=\,\frac{2^n\Gamma(1/d)^n}{n\,d^{n-1}\,\Gamma(n/d)};
\:\displaystyle\int_{\G^*} x_i^{d}\,d\x
\,=\,\frac{{\rm vol}\,(\B_d)}{n+d}\,\quad i=1,\ldots,n.
\end{equation}
\end{theorem}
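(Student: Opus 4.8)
The plan is to exploit the fact that $\P_1$ is a convex program—$g\mapsto\Vert g\Vert_1$ is convex and, by Theorem \ref{th1}, $f$ is strictly convex, so the feasible set $\{g\in\hom:f(g)\leq\rho_d\}$ is convex—and to certify optimality of $g^*$ by producing an explicit subgradient of $f$ at $g^*$ that plays the role of a dual multiplier. First I would record that $g^*=\sum_i x_i^d$ is feasible with $f(g^*)=\rho_d$ (its sublevel set is exactly $\B_d$) and that $\Vert g^*\Vert_1=n$, so it suffices to show $\Vert g\Vert_1\geq n$ for every feasible $g$, together with uniqueness. I would also note that $g^*$ is coercive, hence lies in ${\rm int}\,(\P[\x]_d)$, so that the gradient formula (\ref{relation-1}) applies at $g^*$.

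The crux is the following comparison: writing $c_\alpha:=\frac{n+d}{d}\int_{\G^*}\x^\alpha\,d\x\geq0$ for $\alpha\in\N^n_d$, I claim $c_\alpha\leq c_{d\mathbf{e}_i}=\rho_d/d$ for all $\alpha$, with equality only at $\alpha=d\mathbf{e}_i$. To prove this I would use the closed form (\ref{vol-alpha}) of Theorem \ref{th1} with $h=g^*$: since $\exp(-g^*(\x))=\prod_i\exp(-x_i^d)$ factorizes, the integral $\int_{\R^n}\x^\alpha\exp(-g^*(\x))\,d\x$ splits into a product $\prod_i\int_\R t^{\alpha_i}\exp(-t^d)\,dt$, which vanishes when some $\alpha_i$ is odd and otherwise equals $(2/d)^n\prod_i\Gamma((\alpha_i+1)/d)$. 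Hence, up to the common positive factor $1/\Gamma(2+n/d)$ (recall $1+(n+|\alpha|)/d=2+n/d$ when $|\alpha|=d$), the quantity $\int_{\G^*}\x^\alpha\,d\x$ is proportional to $\prod_i\Gamma((\alpha_i+1)/d)$. Maximizing $\sum_i\log\Gamma((\alpha_i+1)/d)$ over the simplex $\{\alpha\geq0:\sum_i\alpha_i=d\}$ amounts to maximizing a strictly convex function (each summand is convex in $\alpha_i$ because $\log\Gamma$ is strictly convex on $(0,\infty)$), so the maximum is attained precisely at the vertices $d\mathbf{e}_i$; the admissible all-even integer indices form a subset of this simplex, which yields the claim.

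With the certificate in hand, optimality is immediate. For any feasible $g$, convexity of $f$ and the subgradient inequality at $g^*$ give $\langle\nabla f(g^*),g-g^*\rangle\leq f(g)-f(g^*)\leq0$, i.e. $\langle c,g\rangle\geq\langle c,g^*\rangle$ since $\nabla f(g^*)_\alpha=-c_\alpha$. Now $\langle c,g^*\rangle=\sum_i c_{d\mathbf{e}_i}=n\rho_d/d$, using that (\ref{relation-2}) together with symmetry gives $\int_{\G^*}x_i^d\,d\x=\rho_d/(n+d)$; on the other hand $\langle c,g\rangle\leq(\max_\alpha c_\alpha)\,\Vert g\Vert_1=(\rho_d/d)\,\Vert g\Vert_1$ by the comparison above. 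Chaining the two bounds yields $n\rho_d/d\leq(\rho_d/d)\Vert g\Vert_1$, hence $\Vert g\Vert_1\geq n=\Vert g^*\Vert_1$, so $g^*$ is optimal.

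For uniqueness I would invoke the strict convexity of $f$ from Theorem \ref{th1}. If $g\neq g^*$ were a second optimizer, then both are feasible with $\ell_1$-norm $n$ and (necessarily, by homogeneity of $f$) with $f=\rho_d$, so the midpoint $\bar g=(g+g^*)/2\in\P[\x]_d$ satisfies $\Vert\bar g\Vert_1\leq n$ by convexity of the norm and $f(\bar g)<\rho_d$ strictly; since $f$ is homogeneous of degree $-n/d<0$, rescaling $t\bar g$ with the unique $t<1$ making $f(t\bar g)=\rho_d$ forces $\Vert t\bar g\Vert_1\leq tn<n$, contradicting optimality. (Alternatively, tracing the equality cases above pins $g$ to the form $\sum_i g_ix_i^d$ with $g_i\geq0$, and the two resulting constraints $\sum_i g_i=n$ and $\prod_i g_i=1$ force $g_i\equiv1$ by AM--GM.) Finally, the explicit values in (\ref{relationship}) follow by direct computation: (\ref{vol-exp}) with the factorization above gives ${\rm vol}(\B_d)=(2/d)^n\Gamma(1/d)^n/\Gamma(1+n/d)$, which simplifies through $\Gamma(1+n/d)=(n/d)\Gamma(n/d)$, while $\int_{\G^*}x_i^d\,d\x=\rho_d/(n+d)$ is exactly the symmetric form of (\ref{relation-2}). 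I expect the comparison in the second paragraph to be the main obstacle; once the log-convexity argument localizes the maximum of $\prod_i\Gamma((\alpha_i+1)/d)$ at the vertices $d\mathbf{e}_i$, the remaining steps are routine bookkeeping.
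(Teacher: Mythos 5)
Your proposal is correct, and while it rests on the same underlying mechanism as the paper -- the dual certificate is the moment vector $\left(\int_{\G^*}\x^\alpha\,d\x\right)_{\alpha\in\N^n_d}$ of the ball $\B_d$, normalized so that the entries indexed by $d\mathbf{e}_i$ equal the largest entry -- it differs in two substantive ways, both to your advantage. First, the paper's proof hinges on the inequality $\bigl\vert\int_{\G^*}\x^\alpha\,d\x\bigr\vert\leq\int_{\G^*}x_1^d\,d\x$ but merely asserts it inside the KKT verification (the actual justification only appears later, in the proof of Theorem \ref{th3-ell1-q}, via a Hankel moment-matrix lemma imported from Lasserre--Netzer); you prove it directly by using (\ref{vol-alpha}) to reduce each moment to $\prod_i\Gamma((\alpha_i+1)/d)$ through the factorization of $\exp(-\Vert\x\Vert_d^d)$, and then invoking strict log-convexity of $\Gamma$ to localize the maximum of this product over the simplex $\{\alpha\geq0:\sum_i\alpha_i=d\}$ at the vertices $d\mathbf{e}_i$. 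This is a clean, self-contained argument that also hands you the equality cases for free (your AM--GM alternative for uniqueness). Second, where the paper reformulates $\P_1$ with auxiliary variables $\lambda_\alpha$, writes out the full KKT system with multipliers $(\u,\v,\psi,\theta)$, and separately establishes existence of a minimizer via Fatou's lemma, you use the gradient inequality $\langle\nabla f(g^*),g-g^*\rangle\leq f(g)-f(g^*)\leq 0$ to derive $\Vert g\Vert_1\geq n$ directly for \emph{every} feasible $g$, which makes the existence argument unnecessary and is logically tighter. Your uniqueness argument (strict convexity of $f$ plus rescaling by homogeneity) and the evaluation of (\ref{relationship}) coincide with the paper's. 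The only caveats are cosmetic: your $c_\alpha$ clashes with the multinomial weights the paper later denotes $c_\alpha$, and in the parenthetical AM--GM route the feasibility constraint is $\prod_i g_i\geq 1$ rather than $=1$ (equality then follows from $\sum_i g_i=n$ and AM--GM); neither affects correctness.
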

\begin{proof}
Problem $\P_1$ has an optimal solution $g^*\in\hom$. Indeed let $(g_k)$, $k\in\N$, be a minimizing sequence with
$\Vert g_k\Vert_1\to\ell_1^*\geq0$ as $k\to\infty$. Hence the sequence $(g_n)$ is
$\ell_1$-norm bounded and therefore there is a subsequence $(k_t)$ and a polynomial
$g^*\in\hom$ such that for every $\alpha\in\N^n_d$,
$(g_{k_t})_\alpha\to g^*_\alpha$ as $t\to\infty$. Then of course one also obtains the pointwise
$g_{k_t}(\x)\to g^*(\x)$, as $t\to\infty$.
Next, as $f$ is nonnegative, by Fatou's Lemma
\begin{eqnarray*}
\rho_d\,\geq\,\liminf_{t\to\infty}f(g_{k_t})&=&\liminf_{t\to\infty}\frac{1}{\Gamma(1+n/d)}\int_{\R^n}\exp(-g_{k_t}(\x))\,d\x\\
&\geq&\frac{1}{\Gamma(1+n/d)}\int_{\R^n}\liminf_{t\to\infty}\exp(-g_{k_t}(\x))\,d\x\\
&=&\frac{1}{\Gamma(1+n/d)}\int_{\R^n}\exp(-g^*(\x))\,d\x
\end{eqnarray*}
which proves that $g^*$ is feasible for $\P_1$ and so is an optimal solution of $\P_1$.

Problem $\P_1$ has the equivalent formulation:
\[\begin{array}{rl}
\displaystyle\inf_{\lambda_\alpha,g_\alpha} &\displaystyle\sum_{\alpha\in\N^n_d}\lambda_\alpha\\
\mbox{s.t.}& \lambda_\alpha-g_\alpha\geq0,\quad\forall\alpha\in\N^n_d\\
& \lambda_\alpha+g_\alpha\geq0,\quad\forall\alpha\in\N^n_d\\
&f(g)\,\leq\,\rho_d;\:\lambda_\alpha\geq0,\quad\forall\alpha\in\N^n_d,
\end{array}\]
which is a convex optimization problem for which Slater's condition holds.
Hence at an optimal solution $(g^*,\lambda)$, the Karush-Kuhn-Tucker (KKT) optimality conditions
conditions read:
\[\begin{array}{rl}
1-u_\alpha-v_\alpha-\psi_\alpha&=0,\quad\forall\alpha\in\N^n_d\\\\
u_\alpha-v_\alpha +\theta\,\frac{\partial f(g^*)}{\partial g_\alpha}&=0,\quad\forall\alpha\in\N^n_d\\\\
\lambda_\alpha,\,u_\alpha,\,v_\alpha,\,\psi_\alpha,\,\theta&\geq0,\quad\forall\alpha\in\N^n_d\\
f(g^*)&\leq\,\rho_d\\
\lambda_\alpha\psi_\alpha\,=\,0;\:u_\alpha\,(\lambda_\alpha-g^*_\alpha)&=0,\quad\forall\alpha\in\N^n_d\\
\theta\,(1-f(g^*))=0;\:v_\alpha\,(\lambda_\alpha+g^*_\alpha)&=0,\quad\forall\alpha\in\N^n_d
\end{array}\]
for some dual variables $(\u,\v,\psi,\theta)$.

The meaning of the above optimality conditions is clear. 
Indeed at an optimal solution $(g^*,\blambda)$ we must have
$\lambda_\alpha=\vert g^*_\alpha\vert$ for all $\alpha$.
Moreover, from the complementarity conditions 
one also has $u_\alpha\,v_\alpha=0$ whenever $g^*_\alpha\neq0$.
In addition, from the two first equations, and the fact that 
$1=u_\alpha+v_\alpha=\vert u_\alpha-v_\alpha\vert$, $\lambda_\alpha=\vert g^*_\alpha\vert$,
all moments $\int_{\G^*}\x^\alpha d\x$ must be equal whenever $g^*_\alpha\neq0$.

We next show that $\x\mapsto g^*(\x)=\sum_{i=1}^nx_i^{d}$ is an optimal solution.
Recall that $\partial{f(g)}/\partial{g_\alpha}=\frac{n+d}{d}\int_{\G^*}\x^\alpha\,d\x$.
Choose
\[\theta\,:=\,\frac{d}{n+d}\left(\int_{\G^*} x_1^{d}d\x\right)^{-1};\quad u_\alpha\,=\,
\theta\,\frac{n+d}{d}\int_{\G^*}\x^\alpha\,d\x,\]
and $v_\alpha=0$, $\lambda_\alpha=g^*_\alpha$, for all $\alpha\in\N^n_d$. 
(Notice that $u_\alpha=1$ whenever $\x^\alpha=x_i^{d}$ for some $i$.)
Hence
$u_\alpha\geq0$ for all $\alpha\in\N^n_d$ (because $u_\alpha=0$ 
whenever some $\alpha_i$ is odd), and let
and $\psi_\alpha:=1-u_\alpha$ for all $\alpha\in\N^n_d$. Observe that
$\psi_\alpha\geq0$ because 
\[\left\vert\int_{\G^*}\x^\alpha\,d\x\,\right\vert\,\leq\, \int_{\G^*}x_1^{d}\,d\x,\qquad\forall\alpha\in\N^n_d.\]
Therefore,
$(g^*,\u,\v,\psi,\blambda,\theta)$ satisfies the (necessary) KKT-optimality conditions 
and as Slater's conditions holds and $\P_1$ is convex, the KKT-optimality conditions are also sufficient.
Hence we may conclude that $g^*$ is an optimal solution of $\P_1$.
Finally, observe that
\begin{eqnarray*}
\ell_1^*\,=\,\Vert g^*\Vert_1\,=\,n\,=\,\sum_{\alpha\in\N^n_d}\lambda_\alpha&=&\sum_{\alpha\in\N^n_d}(u_\alpha-v_\alpha)\,g^*_\alpha\\
&=&-\theta\,\langle\nabla f(g^*),g^*\rangle\,=\,\theta\,\frac{n}{d}f(g^*),
\end{eqnarray*}
from which we deduce
\[\displaystyle\int_{\G^*} x_i^{d}\,d\x\,=\,\frac{1}{n+d}\,\displaystyle\int_{\G^*}\,d\x,\]
for $i=1,\ldots,n$, which is (\ref{relationship}). Finally, for the numerical value of ${\rm vol}\,(\B_d)$ in
(\ref{relationship}) see Lemma \ref{lemma-appendix}.

It remains to prove that $g^*$ above is the unique optimal solution of $\P_1$. So suppose that $\P_1$
has another optimal solution 
$h\in\hom$ (hence such that $h\neq g^*$ and $\Vert h\Vert_1=\Vert g^*\Vert_1=n$). As we have seen, necessarily
$f(h)=f(g^*)=\rho_d={\rm vol}(\B_d)$. But then as $\P_1$ is a convex optimization problem, any convex combination
$h_\lambda:=\lambda h+(1-\lambda)g^*\in\hom$, $\lambda\in (0,1)$, is also an optimal solution of $\P$.
By strict convexity of $f$,
\[f(h_\lambda)\,<\,\lambda f(h)+(1-\lambda)f(g^*)\,=\,\rho_d.\]
But again by taking $k^{-n/d}f(h_\lambda)=\rho_d$ (so that $k<1$) we exhibit another feasible solution
$\tilde{g}:=k\,h_\lambda\in\hom$ with smaller $\ell_1$-norm
norm $\Vert \tilde{g}\Vert_1=k\Vert g^*\Vert_1$, in contradiction with
the fact that $g^*$ is an optimal solution of $\P$. Hence $g^*$ is the unique optimal solution of $\P_1$.
\end{proof}

\subsection*{An alternative formulation}

We may also consider the alternative but equivalent formulation
\begin{equation}
\label{equiv}
\P'_1:\quad\rho'=\inf_g \{\,f(g):\quad \Vert g\Vert_1 \leq n\,;\quad g\in\hom\,\}.
\end{equation}
\begin{prop}
Let $\P_1$ and $\P'_1$ be as in (\ref{problem-P-22}) and (\ref{equiv}), respectively.
Then $\P'_1$ and $\P_1$ have same optimal value $\rho_d$ and moreover the homogeneous polynomial
$\x\mapsto g^*(\x)=\sum_{i=1}^nx_i^{d}$ is the unique optimal solution of $\P'_1$.
\end{prop}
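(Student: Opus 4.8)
The plan is to exploit the negative-degree homogeneity of $f$ (Theorem \ref{th1}) to convert candidates for $\P'_1$ into candidates for $\P_1$, and then to combine the optimality statement of Theorem \ref{th2-ell1} with the strict convexity of $f$. The first, easy step is to record that $g^*(\x)=\sum_{i=1}^n x_i^{d}$ is feasible for $\P'_1$: indeed $\Vert g^*\Vert_1=n$ (so the constraint $\Vert g\Vert_1\le n$ is met, with equality), and by Theorem \ref{th2-ell1} we have $f(g^*)=\rho_d$. This immediately gives $\rho'\le f(g^*)=\rho_d$.

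The core of the argument is the reverse inequality $\rho'\ge\rho_d$, and this is the only place where I expect a (mild) subtlety, namely getting the direction of the scaling correct. Suppose for contradiction that some $g\in\hom$ with $\Vert g\Vert_1\le n$ satisfies $f(g)<\rho_d$; such a $g$ exists as soon as $\rho'<\rho_d$. Since the sublevel set $\G$ always contains a neighbourhood of the origin (as $g(0)=0<1$), we have $0<f(g)<\rho_d$, so I may choose $\lambda\in(0,1)$ with $\lambda^{-n/d}=\rho_d/f(g)>1$. By the homogeneity $f(\lambda g)=\lambda^{-n/d}f(g)=\rho_d$, so $\lambda g$ is feasible for $\P_1$, while $\Vert\lambda g\Vert_1=\lambda\Vert g\Vert_1\le\lambda n<n=\ell_1^*$, contradicting the optimality of $\ell_1^*=n$ established in Theorem \ref{th2-ell1}. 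Hence $\rho'=\rho_d$, and since $g^*$ attains this value it is an optimal solution of $\P'_1$; no separate existence argument is needed.

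Finally, uniqueness follows at once from strict convexity, and I anticipate no difficulty there. The feasible set $\{g\in\hom:\Vert g\Vert_1\le n\}\cap\P[\x]_d$ is convex, so if $h\neq g^*$ were a second optimal solution, then $h,g^*\in\P[\x]_d$ with $f(h)=f(g^*)=\rho_d$, the midpoint $\tfrac12(h+g^*)$ would be feasible, and the strict convexity of $f$ from Theorem \ref{th1} would give $f(\tfrac12(h+g^*))<\tfrac12 f(h)+\tfrac12 f(g^*)=\rho_d$, contradicting minimality. Therefore $g^*$ is the unique optimal solution of $\P'_1$, completing the proof.
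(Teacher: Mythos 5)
Your proof is correct and follows essentially the same route as the paper: rescale a hypothetical better feasible point of $\P'_1$ using the homogeneity $f(\lambda g)=\lambda^{-n/d}f(g)$ to produce a feasible point of $\P_1$ with $\ell_1$-norm strictly below $n$, contradicting Theorem \ref{th2-ell1}, and then invoke the strict convexity of $f$ for uniqueness. Your version is in fact slightly more careful than the paper's, since you argue from a feasible point with value below $\rho_d$ rather than presupposing that $\P'_1$ attains its infimum, and you spell out the midpoint argument for uniqueness that the paper only states in one line.
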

\begin{proof}
Again as $f(\lambda g)=\lambda^{-n/d}f(g)$ we may only consider those $h\in\hom$
with $\Vert h\Vert_1=n$.
Suppose that $h$ is an optimal solution of $\P'_1$ with $f(h)=\rho'<\rho_d$ and $\Vert h\Vert_1=n$.
Then take $\tilde{g}=kh$ with $k^{-n/(d)}\rho'=\rho_d$ so that $k<1$. Then $\Vert \tilde{g}\Vert_1=k\Vert n\Vert_1=kn<n$.
But this implies that $\tilde{g}$ would be a better solution for $\P_1$ than $g^*$, a contradiction. Therefore
$\rho'\geq\rho_d$ and in fact $\rho'=\rho_d$ as $g^*$ is feasible for $\P'_1$ with $f(g^*)=\rho_d$.
Next, observe that $\P'_1$ is a convex optimization problem with a strictly convex objective function;
hence an optimal solution is unique.
\end{proof}
Therefore problem $\P_1$ has the equivalent formulation:
{\it Among all homogeneous polynomials $g\in\hom$ with $\Vert g\Vert_1=1$ which 
is the one with minimum volume?} 
By Theorem \ref{th1} the $L_d$-unit ball has minimum volume.

\section{The $\ell_2$-norm formulation}

Let denote by  $\x\cdot\y$ the usual scalar product in $\R^n$.
For every $\alpha\in\N^n$ let $c_\alpha:=\frac{(\sum_i\alpha_i){\rm !}}{\alpha_1{\rm !}\cdots\alpha_n{\rm !}}$.
Recall that $\N^n_d:=\{\alpha\in\N^n:\vert\alpha\vert=d\}$ and $s_d={n-1+d\choose d}$. We
now write 
\[\x\mapsto p(\x)\,:=\,\sum_{\alpha\in\N^n_d}c_\alpha\,p_\alpha\,\x^\alpha,\qquad p\in\hom,\]
for some vector $\p=(p_\alpha)\in\R^{s_d}$, and  equip $\hom$ with the scalar product
\[\langle p,q\rangle_d\,:=\,\sum_{\alpha\in\N^n_d}c_\alpha \,p_\alpha\,q_\alpha,\quad p,q\in\hom,\]
with associated norm $\Vert p\Vert_{2,d}^2=\langle p,p\rangle_d$.
Next, denote by $\mathcal{P}_d\subset\hom$ the convex cone of homogeneous polynomials of degree $d$
which are nonnegative, and let $\mathcal{C}_d\subset\hom$
be the convex cone of sums of $d$-powers of linear forms. Then $\mathcal{C}_d$
is the dual cone of $\mathcal{P}_d$, i.e.,  $\mathcal{P}_d^*=\mathcal{C}_d$; see e.g. Reznick \cite{reznick}.

As in \S \ref{section-ell-1}, let $\rho_d={\rm vol}(\B_d)$ and  consider the following optimization problem:
\begin{equation}
\label{problem-P-11}
\P_2:\qquad \ell^*_2\,=\,\inf_g\:\{\,\Vert g\Vert_{2,d}^2: \:f(g)\leq\rho_d\,;\quad g\in\hom\,\},
\end{equation}
a (weighted) $\ell_2$-norm analogue of $\P_1$ in (\ref{problem-P-222}).
In view of Theorem \ref{th1}, problem $\P_2$ is a convex 
optimization problem.

\begin{theorem}
\label{th2}
Problem $\P_2$ in (\ref{problem-P-11}) has a unique optimal solution $g^*\in\hom$
whose vector of coefficients $\g^*=(g^*_\alpha)\in\R^{s_d}$
satisfies:
\begin{equation}
\label{th2-1}
g^*_\alpha\,=\,\ell^*_2\,\frac{n+d}{n}\cdot\frac{\displaystyle\int_{\G^*}\x^\alpha\,d\x}{\displaystyle\int_{\G^*}\,d\x},
\qquad \forall\,\alpha\in\N^n_{d},
\end{equation}
where $\G^*=\{\x:g^*(\x)\leq1\}$ and ${\rm vol}\,(\G^*)=\rho_d$.

Therefore, $\g^*=(g^*_\alpha)$, $\alpha\in\N^n_d$, is an element of $\mathcal{C}_d=\mathcal{P}^*_d$. More precisely:
\begin{equation}
\label{lin-form-1}
\x\,\mapsto\,g^*(\x)\,=\,\ell^*_2\,\frac{n+d}{n}\cdot\frac{\displaystyle\int_{\G^*}(\z\cdot \x)^{d}\,d\z}{\displaystyle\int_{\G^*}\,d\z}
\end{equation}
and in fact the exist  $\z_i\in\R^n$, $\theta_i>0$, $i=1,\ldots,s$ with $s\leq {n-1+d\choose d}+1$, such that
\begin{equation}
\label{lin-form-2}
\x\,\mapsto\,g^*(\x)\,=\,\ell^*_2\,\frac{n+d}{n}\,\sum_{i=1}^s \theta_i \,(\z^i\cdot \x)^{d}.
\end{equation}
\end{theorem}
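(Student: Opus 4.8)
The plan is to mirror the structure of the proof of Theorem~\ref{th2-ell1}: first establish existence and uniqueness of the minimizer of the convex program $\P_2$, then write the KKT conditions and read off (\ref{th2-1}), and finally interpret the resulting moment formula as a sum of $d$-th powers to obtain (\ref{lin-form-1})--(\ref{lin-form-2}). For existence I would take a minimizing sequence $(g_k)\subset\hom$ with $\Vert g_k\Vert_{2,d}^2\to\ell_2^*$; it is norm-bounded, so a subsequence converges coefficientwise, hence pointwise, to some $g^*\in\hom$, and Fatou's Lemma applied to the representation $f(g)=\Gamma(1+n/d)^{-1}\int_{\R^n}\exp(-g)\,d\x$ of Theorem~\ref{th1} gives $f(g^*)\le\liminf_k f(g_k)\le\rho_d$, so $g^*$ is feasible and optimal. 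Uniqueness is immediate here: the objective $\g\mapsto\sum_\alpha c_\alpha g_\alpha^2$ is a strictly convex quadratic form (all $c_\alpha>0$) minimized over the convex set $\{g:f(g)\le\rho_d\}$ (convex by Theorem~\ref{th1}).

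Next I would exploit convexity and Slater's condition (scale the $L_d$-ball polynomial to be strictly feasible), so that the KKT conditions are necessary and sufficient. The one point requiring care is the weighting $c_\alpha$: since the coefficient of $\x^\alpha$ is $c_\alpha g_\alpha$, the chain rule combined with (\ref{relation-1}) gives $\partial f(g^*)/\partial g_\alpha=-c_\alpha\frac{n+d}{d}\int_{\G^*}\x^\alpha\,d\x$, while $\partial\Vert g\Vert_{2,d}^2/\partial g_\alpha=2c_\alpha g_\alpha$. Stationarity $2c_\alpha g^*_\alpha+\theta\,\partial f(g^*)/\partial g_\alpha=0$ then yields, after dividing by $c_\alpha>0$, the relation $g^*_\alpha=\frac{\theta(n+d)}{2d}\int_{\G^*}\x^\alpha\,d\x$ for a multiplier $\theta>0$ (positive because $g=0$ is infeasible, so the volume constraint is active). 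To pin down $\theta$ I would multiply by $c_\alpha g^*_\alpha$, sum over $\alpha$, use $\int_{\G^*}g^*\,d\x=\sum_\alpha c_\alpha g^*_\alpha\int_{\G^*}\x^\alpha\,d\x$ and the Euler identity (\ref{relation-2}), $\int_{\G^*}g^*\,d\x=\frac{n}{n+d}\rho_d$. This gives $\frac{2d}{\theta(n+d)}\ell_2^*=\frac{n}{n+d}\rho_d$, hence $\theta=\frac{2d\,\ell_2^*}{n\rho_d}$, and substituting back produces exactly (\ref{th2-1}).

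The passage to $\mathcal{C}_d$ and to (\ref{lin-form-1}) is then just the multinomial theorem. Since $(\z\cdot\x)^d=\sum_{|\alpha|=d}c_\alpha\z^\alpha\x^\alpha$, integrating in $\z$ over $\G^*$ gives $\int_{\G^*}(\z\cdot\x)^d\,d\z=\sum_\alpha c_\alpha\big(\int_{\G^*}\z^\alpha\,d\z\big)\x^\alpha$, which by (\ref{th2-1}) equals $\frac{n\rho_d}{\ell_2^*(n+d)}\,g^*(\x)$; rearranging is (\ref{lin-form-1}). Thus $g^*$ is a continuous conic combination of $d$-th powers of linear forms, so it lies in the closed cone $\mathcal{C}_d=\mathcal{P}_d^*$.

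Finally, to reduce the integral to the finite sum (\ref{lin-form-2}) I would invoke Carath\'eodory's theorem, and this is the step demanding the most care. First one notes $\G^*$ is compact: $g^*\in\P[\x]_d$ has finite-volume sublevel set, which forces $g^*$ to be positive definite (a nonnegative homogeneous $g^*$ vanishing on a ray would give $\vol(\G^*)=\infty$), so $\{g^*\le1\}$ is bounded. Consequently $K:=\{(\z\cdot\x)^d:\z\in\G^*\}$ is a compact subset of the $s_d$-dimensional space $\hom$, and the normalized integral $w:=\rho_d^{-1}\int_{\G^*}(\z\cdot\x)^d\,d\z$ is the barycenter of a probability measure on $K$, hence lies in $\co K$ (compact in finite dimension). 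Carath\'eodory's theorem then expresses $w$ as a convex combination of at most $s_d+1=\binom{n-1+d}{d}+1$ points of $K$, say $w=\sum_{i=1}^s\lambda_i(\z^i\cdot\x)^d$ with $\lambda_i>0$; rescaling by $\ell_2^*\frac{n+d}{n}$ and setting $\theta_i=\lambda_i$ gives (\ref{lin-form-2}). The main obstacle is precisely this last reduction: one must justify integrability of the integrand (compactness of $\G^*$) and apply the correct barycentric form of Carath\'eodory to obtain the stated bound $\binom{n-1+d}{d}+1$.
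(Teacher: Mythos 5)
Your existence argument, the KKT computation with the $c_\alpha$ weights, the determination of the multiplier via Euler's identity, and the multinomial-theorem derivation of (\ref{lin-form-1}) all match the paper's proof (your value $\theta=2d\,\ell_2^*/(n\rho_d)$ is in fact the correct one; the paper's ``$\lambda^*=4\ell^*_2 d/(n\rho_d)$'' is a factor-of-two slip that does not propagate). Your uniqueness argument via strict convexity of the weighted quadratic objective is cleaner and more explicit than what the paper records. The one genuine divergence is the last step: the paper obtains (\ref{lin-form-2}) by citing a generalized Tchakaloff theorem (Anastassiou, Kemperman), whereas you give a self-contained Carath\'eodory argument on the barycenter of the pushforward measure. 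That substitution is legitimate and the bound $s_d+1$ comes out right.

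However, the lemma you use to make Carath\'eodory applicable is false as stated. You claim that a nonnegative $g\in\hom$ vanishing on a ray must have $\vol(\G)=\infty$, hence that every $g\in\P[\x]_d$ is positive definite and $\G$ compact. Counterexample: in $\R^2$ take $g(\x)=x_1^2(x_1^2+x_2^2)^2\in\mathbf{Hom}_6$, which vanishes on the whole $x_2$-axis; in polar coordinates $\G=\{r\le \vert\cos\theta\vert^{-1/3}\}$, so $\vol(\G)=\tfrac12\int_0^{2\pi}\vert\cos\theta\vert^{-2/3}\,d\theta<\infty$ while $\G$ is unbounded. (Finiteness of $\vol(\G)$ only forces $\int_{S^{n-1}}g^{-n/d}\,d\sigma<\infty$, which tolerates zeros of $g$ on the sphere.) Two repairs are available. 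First, compactness of $\G^*$ for the \emph{optimal} $g^*$ does hold, but it must be deduced \emph{from} (\ref{lin-form-1}): if $g^*(\x_0)=0$ for some $\x_0\neq0$, then $\int_{\G^*}(\z\cdot\x_0)^d\,d\z=0$ forces $\G^*$ to lie in the hyperplane $\z\cdot\x_0=0$ up to a null set, contradicting $\vol(\G^*)=\rho_d>0$; so $g^*$ is positive definite and $\G^*$ is compact, and your Carath\'eodory step then goes through. Second, you can bypass compactness entirely: in finite dimensions the barycenter of any probability measure with finite first moments lies in the (not merely closed) convex hull of its support, and Carath\'eodory then gives the $s_d+1$ atoms --- this is exactly the content of the Tchakaloff--Kemperman result the paper invokes. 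Either fix closes the gap; as written, the compactness claim is the one step that would not survive scrutiny. A smaller point, shared with the paper: the KKT step and the finiteness of the moments $\int_{\G^*}\x^\alpha\,d\x$ tacitly assume $g^*\in{\rm int}\,(\P[\x]_d)$, where (\ref{relation-1}) is valid; it is worth flagging that assumption explicitly.
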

\begin{proof}
That $\P_2$ has an optimal solution follows exactly with same arguments as for $\P_1$.
Moreover Slater's condition also holds for $\P_2$. Hence by the KKT-optimality conditions, there exists $\lambda^*\geq0$ such that
\[2\,g^*_\alpha\,c_\alpha\,=\,-\lambda^*\,\frac{\partial{f(g^*)}}{\partial g_\alpha}\,=\,\lambda^*\,\frac{n+d}{d}\int_{\G^*}c_\alpha\,\x^\alpha\,d\x,\qquad \forall\alpha\in \N^n_{d}.\]
Therefore, multiplying each side with $g^*_\alpha$ and summing up yields
\[2\,\Vert g^*\Vert^2_{2,d}\,=\,2\ell^*_2\,=\,-\lambda^*\langle\nabla f(g^*),g^*\rangle\,=\,\lambda^*\,\frac{n}{d}\,f(g^*)\,=\,\lambda^*\,\frac{n}{d}\,\rho_d.\]
Hence $\lambda^*=4\ell^*_2\,d/(n\rho_d)$ and $g^*_\alpha=\ell^*_2\,\frac{n+d}{n\rho_d}\displaystyle\int_{\G^*}\x^\alpha\,d\x$,
from which (\ref{th2-1}) follows.

But then (\ref{th2-1}) means that $(g^*_\alpha)\in\mathcal{P}_d^*=\mathcal{C}_d$, which yields
(\ref{lin-form-1}). To get (\ref{lin-form-2}) we use a generalization of Tchakaloff's theorem described in
\cite{lowner}, Anastassiou \cite{anastassiou} and Kemperman \cite{kemperman}.
\end{proof}
So both  optimal solutions $g^*_1$ of $\P_1$ and $g^*_2$ of $\P_2$ are sums of $d$-powers of linear forms. 
But $g^*_2$ does not have the parsimony property as shown below.

\begin{cor}
\label{cor-1}
Let $g^*\in\hom$ be the optimal solution of $\P_2$. Then all its coefficients 
$g^*_\alpha$ with $\alpha=2\beta\in\N^n_{d}$ for some $\beta\in\N^n_{d/2}$, are strictly positive. 
Hence if $d\geq4$ the optimal solution $\x\mapsto g^*_1(\x)=\Vert \x\Vert_d^{d}$ of $\P_1$ cannot be an optimal solution.
\end{cor}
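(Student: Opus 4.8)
The plan is to determine the sign of each coefficient $g^*_\alpha$ of the optimal solution $g^*$ of $\P_2$ directly from the explicit representation (\ref{th2-1}) in Theorem \ref{th2}. Since $g=0$ is infeasible for $\P_2$ (its sublevel set is all of $\R^n$), we have $\ell^*_2>0$, and since $\rho_d={\rm vol}\,(\G^*)>0$, formula (\ref{th2-1}) exhibits $g^*_\alpha$ as a \emph{strictly positive} multiple of the moment
\[\mu_\alpha\,:=\,\int_{\G^*}\x^\alpha\,d\x.\]
Thus the whole statement reduces to deciding the sign of $\mu_\alpha$, and in particular to showing that $\mu_\alpha>0$ whenever $\alpha=2\beta$ is even.

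First I would establish the strict positivity of $\mu_{2\beta}$. For such an exponent $\x^{2\beta}=(\x^\beta)^2\geq0$ on all of $\R^n$, so certainly $\mu_{2\beta}\geq0$. To upgrade this to strict positivity, note that $\x^{2\beta}$ vanishes only on the union of the coordinate hyperplanes $\{x_i=0\}$ with $\beta_i>0$, a set of Lebesgue measure zero. Since $\G^*$ has positive Lebesgue measure $\rho_d$, the integrand is strictly positive on a subset of $\G^*$ of measure $\rho_d$, whence $\mu_{2\beta}>0$. Combined with (\ref{th2-1}), this gives $g^*_\alpha>0$ for every $\alpha=2\beta$ with $\beta\in\N^n_{d/2}$, which is the first assertion.

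Finally I would deduce that $g^*_1(\x)=\Vert\x\Vert_d^d=\sum_{i=1}^n x_i^d$ cannot solve $\P_2$ once $d\geq4$ (and $n\geq2$). The even exponents $\alpha=2\beta$, $\beta\in\N^n_{d/2}$, number $\binom{n-1+d/2}{d/2}$, which for $d\geq4$, $n\geq2$ strictly exceeds $n$; concretely there exist \emph{mixed} even exponents, e.g. $\alpha=(2,2,0,\ldots,0)$ when $d=4$, that are not of the pure form $d\,e_i$. The coefficient of $g^*_1$ at any such mixed $\alpha$ is zero, whereas by the previous paragraph the coefficient of $g^*$ there is strictly positive. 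Hence $g^*_1\neq g^*$, and since $\P_2$ has a unique optimal solution (Theorem \ref{th2}), $g^*_1$ is not optimal for $\P_2$. (When $d=2$ every even exponent is pure, $\x^{2\beta}=x_i^2$, so no mixed term exists and indeed $g^*_1=g^*=\sum_i x_i^2$ there, consistent with the hypothesis $d\geq4$.)

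The one point requiring care — and the only place the argument is not purely formal — is the passage from $\mu_{2\beta}\geq0$ to $\mu_{2\beta}>0$: it rests on $\G^*$ genuinely having positive Lebesgue measure, which is guaranteed here because $g^*\in\P[\x]_d$ with ${\rm vol}\,(\G^*)=\rho_d>0$. Everything else is a direct reading of (\ref{th2-1}) together with an elementary count of even multi-indices, so I do not anticipate any serious obstacle.
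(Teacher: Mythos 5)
Your proposal is correct and follows essentially the same route as the paper: the paper's (much terser) proof likewise reads the sign of $g^*_{2\beta}$ off the characterization (\ref{th2-1}), the positivity of the even moments $\int_{\G^*}\x^{2\beta}\,d\x$ being the implicit key fact that you spell out properly. Your added details --- $\ell^*_2>0$ because $g=0$ is infeasible, strict positivity of the moments via the measure-zero vanishing locus, the explicit exhibition of a mixed even exponent for $d\geq4$, $n\geq2$, and the appeal to uniqueness from Theorem \ref{th2} --- are all sound and merely make explicit what the paper leaves to the reader.
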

\begin{proof}
From the characterization (\ref{th2-1}), every coefficient $g^*_{2\beta}$ with $2\vert\beta\vert=d$
must be positive. Hence the optimal solution $g_1^*$ of problem $\P_1$ cannot be an optimal solution of $\P_2$. 
Moreover there are ${n+d/2-1\choose d/2}$ such coefficients.
\end{proof}
Corollary \ref{cor-1} states that the optimal solution of $\P_2$
does not have a parsimony property as it has at least ${n+d/2-1\choose d/2}$ non zero coefficients!

The only case where the optimal solution of $\P_1$ also solves $\P_2$ is the quadratic case $d=2$.
Indeed straightforward computation shows that 
(\ref{th2-1}) is satisfied by the polynomials $g^*_1$ of Theorem \ref{th1}.

\begin{ex}
{\rm Let $n=2$ and $d=4$. By symmetry we may guess that the optimal solution $g^*\in\hom$ is of the form:
\[\x\mapsto g^*(\x)\,=\,g_{40}\,(x_1^4+x_2^4)\,+\,6\,g_{22}\,x_1^2\,x_2^2,\]
with $\ell^*_2=2g_{40}^2+g_{22}^2$ and
\[g_{40}\,=\,3\,\ell^*_2\,\frac{\displaystyle\int_{\G^*} x_1^4\,d\x}{\displaystyle\int_{\G*}\,d\x}\,;\quad
g_{22}\,=\,3\,\ell^*_2\,\frac{\displaystyle\int_{\G^*} x_1^2\,x_2^2\,d\x}{\displaystyle\int_{\G^*}\,d\x}.\]
Observe that by homogeneity, an optimal solution $g^*$ of $\P_2$
is also optimal when we replace $\rho_d$ with any constant $a$; only the optimal value changes
and the characterization (\ref{th2-1}) remains the same with the new optimal value $\ell^*_2$. 
After several numerical trials we conjecture that
\[\x\mapsto g^*(\x)\,\approx\,x_1^4+x_2^4+2\,x_1^2x_2^2\,=\,(x_1^2+x_2^2)^2,\]
i.e. $\g^*=(1,0,1/3,0,1)$, is an optimal solution. But then observe that
\[\G^*\,=\,\{\,\x:g^*_2(\x)\leq1\,\}\,=\,\{\,\x:(x_1^2+x_2^2)^2\leq\,1\,\}\,=\,\B_2!\]
That is, $\G^*$ is another representation of the unit sphere $\B_2$ by
homogeneous polynomials of degree $4$ instead of quadratics!
\[\int_{\G^*}\,d\x\,\approx\,3.1415926\,;\quad \int_{\G^*}x_1^4\,d\x\,\approx\,0.392699\,;\quad
\int_{\G^*}x_1^2x_2^2\,d\x\,\approx\,0.130899.\]
With $a:=\int_{\G^*}d\x$, (\ref{th2-1}) yields  (up to $10^{-8}$)
\[3\,\ell^*_2\cdot\frac{\displaystyle\int_{\G^*}\x_1^4\,d\x}{\displaystyle\int_{\G^*}\,d\x}\,=\, 1= g^*_{40}\,;
\quad 3\ell^*_2\cdot\frac{\displaystyle\int_{\G^*}x_1^2x_2^2\,d\x}{\displaystyle\int_{\G^*}\,d\x}\,=\, 
\frac{1}{3}\,=\,g^*_{22}.\]
Observe also that 
\[(x_1^2+x_2^2)^2\,=\,\frac{1}{6}(x_1+x_2)^4+\frac{1}{6}(x_1-x_2)^4+\frac{2}{3}\,(x_1^4+
x_2^4),\]
i.e., a sum of $4$-powers of linear forms as predicted by Theorem \ref{th2}.
}\end{ex}
In fact we have:
\begin{theorem}
If $d=4$ then for every $n$ the optimal solution of $\P_2$ 
is $g^*(\x)=(\sum_{i=1}^n x_i^2)^2$ whose level set $\G$ is the unit ball $\B_2$.
\end{theorem}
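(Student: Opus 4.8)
The plan is to use the uniqueness asserted in Theorem~\ref{th2}. Since $\P_2$ has a \emph{unique} optimal solution, and any optimal solution must satisfy the fixed-point characterization (\ref{th2-1}), it suffices to exhibit one $g^*\in\hom$ for which (\ref{th2-1}) holds and then invoke uniqueness. Guided by the invariance of $\P_2$ under the orthogonal group and by the Example, I would take the candidate $g^*(\x)=(\sum_{i=1}^n x_i^2)^2$, whose sublevel set is $\G^*=\{\x:(\sum_i x_i^2)^2\le1\}=\{\x:\sum_i x_i^2\le1\}=\B_2$, the Euclidean unit ball.

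First I would record the normalized coefficients of this candidate in the basis of \S4, where $g(\x)=\sum_\alpha c_\alpha g_\alpha\x^\alpha$ with $c_\alpha=4!/(\alpha_1!\cdots\alpha_n!)$. Expanding $g^*(\x)=\sum_i x_i^4+2\sum_{i<j}x_i^2 x_j^2$ and using $c_\alpha=1$ for $\x^\alpha=x_i^4$ and $c_\alpha=6$ for $\x^\alpha=x_i^2x_j^2$ ($i\neq j$), one gets $g^*_\alpha=1$ when $\x^\alpha=x_i^4$, $g^*_\alpha=1/3$ when $\x^\alpha=x_i^2x_j^2$ with $i\neq j$, and $g^*_\alpha=0$ whenever $\alpha$ has an odd component. (For $n=2$ this recovers $\g^*=(1,0,1/3,0,1)$ from the Example.)

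Next I would verify that $g^*_\alpha$ is proportional to $\int_{\B_2}\x^\alpha\,d\x$ with a single constant $\kappa$ independent of $\alpha$. Here is where $d=4$ is special: the only $\alpha\in\N^n_4$ with all even components are of the two shapes $x_i^4$ and $x_i^2x_j^2$ ($i\neq j$), so checking these two cases (plus the vanishing of odd moments by the reflection symmetry $x_i\mapsto -x_i$, matching $g^*_\alpha=0$) exhausts all $\alpha$. For $\alpha=2a$ I would apply Theorem~\ref{th1} to the positively homogeneous function $h(\x)=\sum_i x_i^2$ of degree $2$, for which $\{h\le 1\}=\B_2$; formula (\ref{vol-alpha}) together with $\int_{\R}x^{2k}e^{-x^2}\,dx=\Gamma(k+\tfrac{1}{2})$ yields
\[
\int_{\B_2}\x^{2a}\,d\x=\frac{1}{\Gamma(1+\tfrac{n}{2}+|a|)}\prod_{i=1}^n\Gamma\!\left(a_i+\tfrac{1}{2}\right).
\]
The only computation that matters is the ratio between the two nonzero types: since $|a|=2$ in both cases the factors $\Gamma(1+\tfrac{n}{2}+|a|)$ cancel, and using $\Gamma(\tfrac{5}{2})=\tfrac{3}{4}\sqrt\pi$, $\Gamma(\tfrac{3}{2})^2=\tfrac{1}{4}\pi$ one finds $\int_{\B_2}x_1^4\,d\x\big/\int_{\B_2}x_1^2x_2^2\,d\x=3$, which is exactly the ratio $1/(1/3)=3$ of the corresponding coefficient values. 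Hence a single $\kappa>0$ works for all $\alpha\in\N^n_4$.

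Finally I would close the argument. A positive rescaling $g^*\mapsto c\,g^*$ leaves the sublevel set a Euclidean ball (of radius $c^{-1/4}$), and moments of a ball scale homogeneously in the radius, so all the ratios above are preserved; thus I may fix $c>0$ so that $\vol(\G^*)=\rho_d$ and then (\ref{th2-1}) holds with $\ell^*_2=\Vert c\,g^*\Vert_{2,d}^2$, exactly as noted in the Example. Since $g^*\ge0$ has a bounded, hence finite-volume, sublevel set it is feasible, so by the uniqueness in Theorem~\ref{th2} it is \emph{the} optimal solution. The single genuinely computational ingredient is the moment ratio $3:1$ over the Euclidean ball; everything else is bookkeeping plus the appeal to uniqueness, so that ratio is where I would concentrate the care.
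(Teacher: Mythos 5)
Your proposal is correct and follows essentially the same route as the paper: exhibit $(\sum_i x_i^2)^2$, verify the fixed-point/KKT characterization (\ref{th2-1}) for it, and invoke the uniqueness from Theorem \ref{th2}. The only difference is bookkeeping: the paper computes the normalized moments $\int_{\B_2}x_1^4\,d\x\big/\int_{\B_2}d\x=3/((n+2)(n+4))$ and $\int_{\B_2}x_1^2x_2^2\,d\x\big/\int_{\B_2}d\x=1/((n+2)(n+4))$ and checks the constant in (\ref{th2-1}) exactly, whereas you check only the $3:1$ ratio --- a legitimate shortcut, though you should note that what forces the overall constant to come out right is relation (\ref{relation-2}) (Euler's identity), not the rescaling itself, since (\ref{th2-1}) is invariant under $g\mapsto\lambda g$ and rescaling therefore cannot repair a mismatched proportionality constant.
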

\begin{proof}
Let $\x\mapsto g^*(\x):=(\sum_{i=1}^n x_i^2)^2$. It is enough to prove that (\ref{th2-1}) holds 
(as by homogeneity (\ref{th2-1}) still holds when one replaces $g$ with $\lambda\,g$ for any 
$\lambda>0$.)
Since $g^*(\x)=\sum_ix_i^4+2\sum_{i<j}x_i^2x_j^2$, we have $\ell^*_2=(n+\frac{n(n-1)}{2}\cdot 6(\frac{2}{6})^2)=n(n+2)/3$.
Moreover
\[A:=\int_{\B_2}x_1^4\,d\x\,\left(\int_{\B_2}d\x\,\right)^{-1}\,=\,\frac{3}{(n+4)(n+2)}\]
Therefore with $d=4$,
\[\ell^*_2\,\frac{n+d}{n}\,A\,=\,\frac{n(n+2)}{3}\,\frac{n+4}{n}\,\frac{3}{(n+2)(n+4)}\,
\,=\,1\,=\,g^*_{40}.\]
Similarly one has
\[B:=\int_{\B_2}x_1^2\,x_2^2\,d\x\,\left(\int_{\B_2}d\x\,\right)^{-1}\,=\,\frac{1}{(n+4)(n+2)}\]
so that
\[\ell^*_2\,\frac{n+4}{n}\,B\,=\,\frac{n(n+2)}{3}\,\frac{n+4}{n}\,\frac{1}{(n+2)(n+4)}\,
\,=\,\frac{1}{3}\,=\,g^*_{22}.\]
\end{proof}
In other words, when $d=4$ the Euclidean unit ball $\B_2=\{\x:\sum_ix_i^2\leq1\}$ (which has
the equivalent quartic representation $\{\x: (\sum_{i=1}^n x_i^2)^2\leq 1\}$) solves problem $\P_2$!

\section{The SOS formulation}

As we have seen that both optimal solutions of the $\ell_1$-norm and $\ell_2$-norm formulations are 
sums of $d/2$ powers of linear forms, hence sums of squares (in short SOS). Therefore one may now restrict to
homogeneous polynomials in $\hom$ that are SOS, i.e., polynomials of the form
\[\x\,\mapsto\,g_\Q(\x)\,=\,\v_{d/2}(\x)^T\Q\,\v_{d/2}(\x),\]
where $\v_{d/2}(\x)=(\x^\alpha)$, $\alpha\in\N^n_{d/2}$, and $\Q$ is some real psd symmetric matrix 
($\Q\succeq0$) of size $s(d/2)={n-1+d/2\choose d/2}$. If we denote by $\s_d$ the space of real symmetric matrices of size $s(d)$, there is not a one-to-one correspondence between 
$g\in\hom$ and $\Q\in\s_d$ as several $\Q$ may produce the same polynomial $g_\Q$. 

Given $0\preceq\Q\in\s_d$, denote by $\G_\Q$ the sublevel set
$\{\x:g_\Q(\x)\,\leq\,1\}$ associaterd woth $g_\Q\in\hom$ and let $f(\Q):={\rm vol}\,(\G_\Q)$. Observe that 
again $f$ is positively homogeneous of degree $-n/d$.\\

So the natural analogue for $\Q$ of the $\ell_1$-norm $\Vert g\Vert_1$ for $g\in\hom$
is now the nuclear norm of $\Q$ which as $\Q\succeq0$ reduces to $\langle \bI,\Q\rangle={\rm trace}\,(\Q)$.
It is well-known that optimizing the nuclear norm on convex problems 
with matrices induce a parsimony effect, namely an optimal solution 
will generally have  a small rank. In our context, $\Q$ having a small
rank means that $g_\Q$ can be written a sum of a small number of squares.
However, when expanded in the monomial basis, $g_\Q$ may have many non-zero coefficients
and so its $\ell_1$-norm $\Vert g_\Q\Vert_1$ may not be small.\\

So in the same spirit as for the $\ell_1-$ and $\ell_2$-norm, we now consider the optimization problem:
\[\P_3:\quad \ell^*_3=\inf_{\Q\in\s_d}\,\{\,\langle \bI,\Q\rangle:\:f(\Q)\,\leq\,\rho_d\,; \quad\Q\succeq0\,\},\]
and characterize its unique optimal solution $\Q^*$.
\begin{theorem}
\label{th-Q}
Problem $\P_3$ has a unique optimal solution. Moreover, $\Q^*\in\s_d$
is an optimal solution of $\P_3$ if and only if $f(\Q^*)=\rho_d$ and:
\begin{equation}
\label{th-Q-1}
\bI\,\succeq\,\frac{(n+d)\,\langle\bI,\Q^*\rangle}{n\,\rho_d}\,\displaystyle\int_{\G_{\Q^*}}\v_{d/2}(\x)\,\v_{d/2}d(\x)^T\,d\x
\end{equation}
where
\[\x\mapsto g_{\Q^*}(\x):=\v_{d/2}(\x)^T\Q^*\v_{d/2}(\x),\quad\x\in\R^n.\]
Moreover, the polynomial $\x\mapsto g(\x)=\sum_{i=1}^nx_i^{d}$ cannot be solution
of $\P_3$.
\end{theorem}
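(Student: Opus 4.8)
The plan is to follow the same KKT-based strategy that worked for $\P_2$, but now in the matrix variable $\Q\in\s_d$, being careful about the PSD cone and its self-duality. First I would establish existence of an optimal solution by the same Fatou-type argument as in Theorem~\ref{th2-ell1}: a trace-bounded minimizing sequence $(\Q_k)$ of PSD matrices has a convergent subsequence (the PSD cone is closed, and $\mathrm{trace}(\Q)$ controls all entries since $\Q\succeq0$), and lower semicontinuity of $f$ via Fatou shows the limit is feasible. Slater's condition holds (take any $\Q\succ0$ whose sublevel set has volume strictly less than $\rho_d$, rescaling by homogeneity), so the KKT conditions are both necessary and sufficient for this convex problem.

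Next I would write out the KKT stationarity condition. The gradient of the objective $\langle\bI,\Q\rangle$ is $\bI$; the gradient of the constraint $f(\Q)$ with respect to $\Q$ is obtained by the chain rule from $f(\Q)=\mathrm{vol}(\G_\Q)$ together with $g_\Q(\x)=\v_{d/2}(\x)^T\Q\,\v_{d/2}(\x)$, so that $\partial f/\partial\Q_{\beta\gamma}$ is a linear combination of the moments $\int_{\G_{\Q}}\x^\beta\x^\gamma\,d\x$, i.e. the matrix $-\tfrac{n+d}{d}\int_{\G_{\Q^*}}\v_{d/2}(\x)\v_{d/2}(\x)^T\,d\x$ by relation~(\ref{relation-1}). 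The crucial difference from $\P_2$ is the PSD constraint $\Q\succeq0$: its Lagrange multiplier is a PSD matrix $\mathbf{Z}\succeq0$ with complementary slackness $\langle\mathbf{Z},\Q^*\rangle=0$. The stationarity condition then reads
\[
\bI\,-\,\theta\,\frac{n+d}{d}\int_{\G_{\Q^*}}\v_{d/2}(\x)\v_{d/2}(\x)^T\,d\x\,=\,\mathbf{Z}\,\succeq\,0,
\]
for a scalar multiplier $\theta\ge0$. To pin down $\theta$ I would take the inner product of the stationarity equation with $\Q^*$, use $\langle\mathbf{Z},\Q^*\rangle=0$, and apply Euler's identity $\langle\nabla f(\Q^*),\Q^*\rangle=-\tfrac{n}{d}f(\Q^*)$ exactly as in the proof of Theorem~\ref{th2}, obtaining $\theta=\tfrac{d\,\langle\bI,\Q^*\rangle}{n\,\rho_d}$. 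Substituting this value yields precisely the semidefinite inequality~(\ref{th-Q-1}), and the constraint is active ($f(\Q^*)=\rho_d$) since $\theta>0$ forces it by complementary slackness.

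For the final claim, that $g(\x)=\sum_{i=1}^n x_i^d$ cannot solve $\P_3$, I would argue by contradiction: this polynomial corresponds to a diagonal Gram matrix $\Q$ supported only on the pure-power basis vectors $\x^\alpha$ with $\alpha=\tfrac{d}{2}e_i$, so $\Q^*$ would have rank $n$ with many zero diagonal entries (all the mixed monomials $\x^\beta$, $|\beta|=d/2$ not a pure power, sit in the kernel). I would then test the necessary condition~(\ref{th-Q-1}) against one of these kernel directions: the integral $\int_{\G}\x^{2\beta}\,d\x$ over the corresponding sublevel set is strictly positive for every $\beta$, so the diagonal entry of the right-hand side indexed by such a mixed $\beta$ is strictly positive while the left-hand side $\bI$ contributes exactly $1$ there---and the off-diagonal couplings force the matrix $\mathbf{Z}=\bI-\theta\,\frac{n+d}{d}\int\v_{d/2}\v_{d/2}^T$ to fail to be PSD, since the moment matrix $\int_{\G}\v_{d/2}(\x)\v_{d/2}(\x)^T\,d\x$ is strictly positive definite (it is a genuine Gram matrix of linearly independent monomials against a measure with full-dimensional support). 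I expect the main obstacle to be verifying this last PSD violation cleanly: one must check that the required multiplier $\theta$ making the pure-power diagonal entries vanish is incompatible with PSD-ness on the mixed block, which comes down to a positive-definiteness statement for the moment matrix of $\B_d$ and a comparison of diagonal magnitudes. A convenient route is to exhibit a single explicit vector $\mathbf{w}$ (for instance one concentrated on a mixed monomial direction) with $\mathbf{w}^T\mathbf{Z}\,\mathbf{w}<0$, which avoids analyzing the full spectrum.
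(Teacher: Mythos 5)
Your existence argument and the KKT derivation are essentially the paper's proof: stationarity $\bI+\lambda\nabla f(\Q^*)=\Psi\succeq0$ with a PSD multiplier, complementary slackness $\langle\Psi,\Q^*\rangle=0$, and Euler's identity $\langle\nabla f(\Q^*),\Q^*\rangle=-\tfrac{n}{d}f(\Q^*)$ to pin down $\lambda=d\,\langle\bI,\Q^*\rangle/(n\rho_d)$, which yields (\ref{th-Q-1}). Two things are missing from that part, though. First, the theorem is an \emph{if and only if}, and your argument only gives necessity; for sufficiency you must verify that a $\Q^*$ satisfying $f(\Q^*)=\rho_d$ and (\ref{th-Q-1}) admits multipliers satisfying the full KKT system, and the nontrivial point is complementary slackness: $\langle\Q^*,\Psi\rangle=0$ for the candidate $\Psi$ is not automatic from (\ref{th-Q-1}) but follows from the identity $\int_{\G_{\Q^*}}g_{\Q^*}\,d\x=\tfrac{n}{n+d}\int_{\G_{\Q^*}}d\x$ of Theorem \ref{th1}. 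Second, you never address uniqueness, which the paper derives from strict convexity and homogeneity of $f$ (the same rescaling argument as for $\P_1$).

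The genuine gap is in the last step. Your proposed test vector, ``concentrated on a mixed monomial direction'' $e_\beta$, gives $\mathbf{w}^T\mathbf{Z}\mathbf{w}=1-(n+d)\int_{\B_d}\x^{2\beta}d\x/\rho_d\geq 0$, because $\int_{\B_d}\x^{2\beta}d\x\leq\int_{\B_d}x_1^d\,d\x=\rho_d/(n+d)$ (the same moment bound used in the proof of Theorem \ref{th2-ell1}); so that vector can never exhibit a violation, and invoking positive definiteness of the moment matrix does not by itself show $\bI-cM\not\succeq0$. The violation must couple a \emph{pure-power} index with another index: by Lemma \ref{lemma-appendix} the diagonal entry of $\mathbf{Z}$ at $\alpha=(d/2)e_i$ is exactly $1-(n+d)\cdot\tfrac{1}{n+d}=0$, and a PSD matrix with a zero diagonal entry must have its entire corresponding row equal to zero; but the entry $\mathbf{Z}_{(d/2)e_1,\beta}=-(n+d)\int_{\B_d}\x^{(d/2)e_1+\beta}d\x/\rho_d$ is nonzero for a suitable $\beta$ with $(d/2)e_1+\beta$ having all even components (the paper takes $\beta=(d/2)e_n$, i.e.\ the corner entries). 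Equivalently, take $\mathbf{w}=e_{(d/2)e_1}+t\,e_\beta$ and let $t\to0$ with the right sign. So the conclusion is correct and reachable by your route, but the specific PSD-violation certificate you propose does not work and must be replaced by the zero-diagonal argument.
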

\begin{proof}
Let $(\Q_k)$, $k\in\N$, be a minimizing sequence. As $\sup_k\langle \bI,\Q_k\rangle \leq\langle \bI,\Q_1\rangle$,
the sequence $(\Q_k)$ is norm-bounded in $\s_d$. Therefore
it has a converging subsequence $\Q_{k_j}\to\Q^*\in\s_d$ with $\Q^*\succeq0$
and $f(\Q^*)\leq\rho_d$ (by a simple continuity argument). Therefore $\Q^*$ is an optimal solution
of $\P_3$ and again, uniqueness follows from the strict convexity and homogeneity of the function $f$.
Moreover, Slater's condition obviously holds for $\P_3$ which is a convex optimization problem.
Then the  KKT-optimality conditions read:
\begin{eqnarray}
\label{kkt1}
\bI+\lambda\,\nabla f(\Q^*)&=&\Psi\,\succeq\,0\\
\label{kkt2}
\langle \Q^*,\Psi\rangle&=&0\\
\label{kkt3}
f(\Q^*)&\leq&\rho_d; \quad \lambda\,(f(\Q^*)-\rho_d)=0,
\end{eqnarray}
for some dual variables $(\lambda,\Psi)\in\R_+\times\s_d$.
By homogeneity one must have $f(\Q^*)=\rho_d$.
Again Euler's identity for homogeneous functions yields $\langle \nabla f(\Q^*),\Q^*\rangle=-nf(\Q^*)/d$. 
Therefore using (\ref{kkt1}) and (\ref{kkt2}) 
one obtains $\ell^*_3=\langle \bI,\Q^*\rangle=\lambda\,n\,\rho_d/d$, that is, $\lambda=d\,\ell^*_3/(n\rho_d)$.
Next combining $\Psi\succeq0$ with Theorem \ref{th1}, one also gets
\[\bI\,\succeq\,\frac{n+d}{d}\,\lambda \int_{\G_{\Q^*}}\v_{d/2}(\x)\,\v_{d/2}(\x)^T\,d\x,\]
or, equivalently
\[\bI\,\succeq\,\frac{(n+d)\,\langle\bI,\Q^*\rangle}{n\,\rho_d}\,\displaystyle\int_{\G_{\Q^*}}\v_{d/2}(\x)\,\v_{d/2}(\x)^T\,d\x,\]
which is (\ref{th-Q-1}). This proves the {\it only if part} in Theorem \ref{th-Q}.

Conversely, assume that $0\preceq\Q^*\in\s_d$ satisfies $f(\Q^*)=\rho_d$ and (\ref{th-Q-1}).
Let
\[\lambda\,:=\,d\,\langle\bI,\Q^*\rangle/(n\rho_d)\,;\quad\Psi:=\bI\,-\,
\frac{(n+d)\,\langle\bI,\Q^*\rangle}{n\,\rho_d}\,\displaystyle\int_{\G_{\Q^*}}\v_{d/2}(\x)\,\v_{d/2}(\x)^T\,d\x.\]
Obviously $\lambda\geq0$, $\Psi\succeq0$, and:
\begin{eqnarray*}
\langle \Q^*,\Psi\rangle&=&
\langle\bI,\Q^*\rangle\left[1-\frac{n+d}{n\,\rho_d}\,
\displaystyle\int_{\G_{\Q^*}}\left\langle\Q^*,\v_{d/2}(\x)\,\v_{d/2}(\x)^T\right\rangle\,d\x\right]\\
&=&\langle\bI,\Q^*\rangle\left[1-\frac{n+d}{n\,\rho_d}\,
\displaystyle\int_{\G_{\Q^*}}g_{\Q^*}(\x)\,d\x\right]\\
&=&\langle\bI,\Q^*\rangle\left[1-\frac{n+d}{n\,\rho_d}\,
\frac{n}{n+d}\displaystyle\int_{\G_{\Q^*}}\,d\x\right]\quad\mbox{[by Theorem \ref{th1}]}\\
&=&0,
\end{eqnarray*}
which shows that the triplet $(\Q^*,\lambda,\Psi)$ satisfy the KKT-optimality conditions (\ref{kkt1})-(\ref{kkt3}).
As Slater's condition holds for $\P_3$, (\ref{kkt1})-(\ref{kkt3}) are sufficient for optimality, which  concludes the {\it if part}
of the proof.

We next prove that $\x\mapsto g(\x):=\sum_{i=1}^n x_i^d$ (so that $\G=\B_d$) cannot be the optimal solution of $\P_3$. Among all $\Q\succeq0$ such that $\sum_ix_i^d=\v_{d/2}(\x)^T\Q\v_{d/2}(\x)$, the one that minimizes 
${\rm trace}\,(\Q)$ is $\Q=\I$ with ${\rm trace}\,(\Q)=n$.
By Lemma \ref{lemma-appendix}, observe that
\[\frac{\displaystyle\int_{\G}x_i^d\,d\x}{\displaystyle\int_{\G}d\x}\,=\,
\frac{\displaystyle\int_{\B_d}x_i^d\,d\x}{\rho_d}\,=\,\frac{1}{n+d},\]
and so (\ref{th-Q-1}) cannot hold because for instance the north-west and south-east corner
elements of the matrix
\begin{eqnarray*}
\A&:=&\bI\,-\,\frac{(n+d)\,\langle\bI,\Q\rangle}{n\,\rho_d}\,\displaystyle\int_{\G}\v_{d/2}(\x)\,\v_{d/2}d(\x)^T\,d\x\\
&=&\bI\,-\,(n+d)\frac{\displaystyle\int_{\G}\v_{d/2}(\x)\,\v_{d/2}d(\x)^T\,d\x}
{\displaystyle\int_{\G}\,d\x}\end{eqnarray*}
vanish whereas the north-east and south-west corner elements are non-zero, in
contradiction with $\A\succeq0$. 
\end{proof}
The fact that the $L_d$-unit ball is not an optimal solution of $\P_3$
is not a surprise as  the sparsity-induced norm ${\rm trace}\,(\Q)$ (when $\Q\succeq0$)
aims at find a polynomial $g_\Q\in\hom$ which can be written as a sum of squares
with as few terms as possible in the sum. On the other hand, the sparsity-induced norm
$\Vert g\Vert_1$ aims at finding a polynomials $g\in\hom$ with
as few monomials as possible when $g$ is expanded in the monomial basis. These can be two conflicting criteria!

\section{Extension to generalized polynomials}

In this section $d$ is now a (positive) rational with $L_d$-unit ball
$\{\,\x:\:\,\sum_{i=1}^n\vert x_i\vert^d\leq1\,\}$. Even though the function $\x\mapsto \sum_{i=1}^n\vert x_i\vert^d$ is a ``generalized polynomial" and not a polynomial any more,  it is still a nonnegative positively homogeneous of degree $d$ for which Theorem \ref{th1}(a) applies. On the other hand, the vector
space of positively homogeneous functions of degree $d$ is not finite-dimensional and so for optimization purposes we need define an appropriate finite-dimensional
analogue of $\hom$.

We will use the notation $\vert\x\vert\in\R^n_+$ for the vector $(\vert x_1\vert,\ldots,\vert x_n\vert)$ and
$\vert\x\vert^\alpha$ for the generalized monomial $\vert x_1\vert^{\alpha_1}\cdots\vert x_n\vert^{\alpha_n}$, whenever
$\alpha\in\mathbb{Q}^n_+$.
\begin{defn}
Let $0<d\in\mathbb{Q}$. Define the space $\mathscr{C}_d$ as:
\begin{equation}\label{general}
\mathscr{C}_d\,:=\,\{\,\sum_{\alpha\in\mathbb{Q}^n_+}g_\alpha\,\vert \x\vert^\alpha\,:\quad g_\alpha\in\R\,;\quad\vert\alpha\vert\,(:=\sum_{i=1}^n \alpha_i)\,=d\,\}
\end{equation}
where only {\it finitely many} coefficients $g_\alpha$ are non-zero. Then:
\[\Vert g\Vert_1\,=\,\sum_\alpha\,\vert g_\alpha\vert\,;\quad
\Vert g\Vert_2^2\,=\,\sum_\alpha\,g_\alpha^2.\]
\end{defn}
The space $\mathscr{C}_d$ is a real infinite-dimensional vector space and each element of $\mathscr{C}_d$ is a
positively homogeneous functions of degree $d$.
\begin{defn}
With $q\in\N$ let $\Z^n_q\subset\R^n$ be the lattice $\{\,\z\in\R^n: q\,\z\in\Z^n\,\}$. With
$0<d\in\Z_q$, denote by
$\n^n_{dq}$ the finite set $\{\,\alpha\in\Z^n_q:\alpha\geq0\,;\:\sum_{i=1}^n\alpha_i=d\,\}$
of cardinality $m(d,q)$ and by $\homq\subset\mathscr{C}_d$
the vector space of functions $g:\R^n\to \R$ defined by:
\begin{equation}
\label{help}
\homq\,:=\,\{\sum_{\alpha\in\n^n_{dq}}\,g_\alpha\,\vert \x\vert^{\alpha}:\quad (g_\alpha)\in\R^{m(d,q)}\,\},\end{equation}
which is a finite-dimensional vector space.
\end{defn}
For instance with $n=2$ and $d=1$ one has $\B_1:=\{\,\x\in\R^n:\:\sum_{i=1}^n\vert x_i\vert\leq1\,\}$ and with $0<q\in\N$,
\[\n_{1q}\,=\,\{(\frac{k}{q},\frac{q-k}{q}): k=0,\ldots,q\,\}\,;\quad m(d,q)=q+1,\]
and $g\in\homq$ can be written as
\[\x\mapsto g(\x)\,=\,\sum_{k=0}^q\,g_k \,\vert x_1\vert  ^{\frac{k}{q}}\,\vert x_2\vert^{\frac{q-k}{q}},\qquad \x\in\R^2,\]
for some vector $\g=(g_k)\in\R^{t+1}$. 

Obviously
$\homq$ is a vector space of dimension $m(d,q)$ and every function in $\homq$ is positively homogeneous
of degree $d$. 
As we did in \S \ref{section-ell-1}, with $g\in\homq$ is associated the level set $\G:=\{\,\x:g(\x)\leq 1\,\}$, and 
whenever $\G$ has finite volume let $f(g):={\rm vol}\,(\G)$, $g\in\homq$. It follows that
$f$ is positively homogeneous of degree $-n/d$. Therefore (\ref{vol-exp})-(\ref{relation-2})
in Theorem \ref{th1} holds for $f$. 
Finally, let
\[\x\mapsto g^*(\x)\,:=\,\sum_{i=1}^n\vert x_i\vert^d;\quad \B_d=\G^*;\quad \rho_d\,=\,{\rm vol}\,(\G^*),\]
and consider the finite-dimensional optimization problem
\begin{equation}
\label{problem-P-33}
\P_{1q}:\qquad \ell_1^*\,=\,\inf_g\:\{\,\Vert g\Vert_1: \:f(g)\,\leq\,\rho_d\,;\quad g\in\homq\,\}.
\end{equation}
When $d<1$ the unit ball $\B_d$ is not convex and is not associated with a norm as can be seen
in Figure \ref{fig-undemi} where $d=1/2$.
\begin{center} 
\begin{figure}
 \resizebox{0.9\textwidth}{!}
{\includegraphics{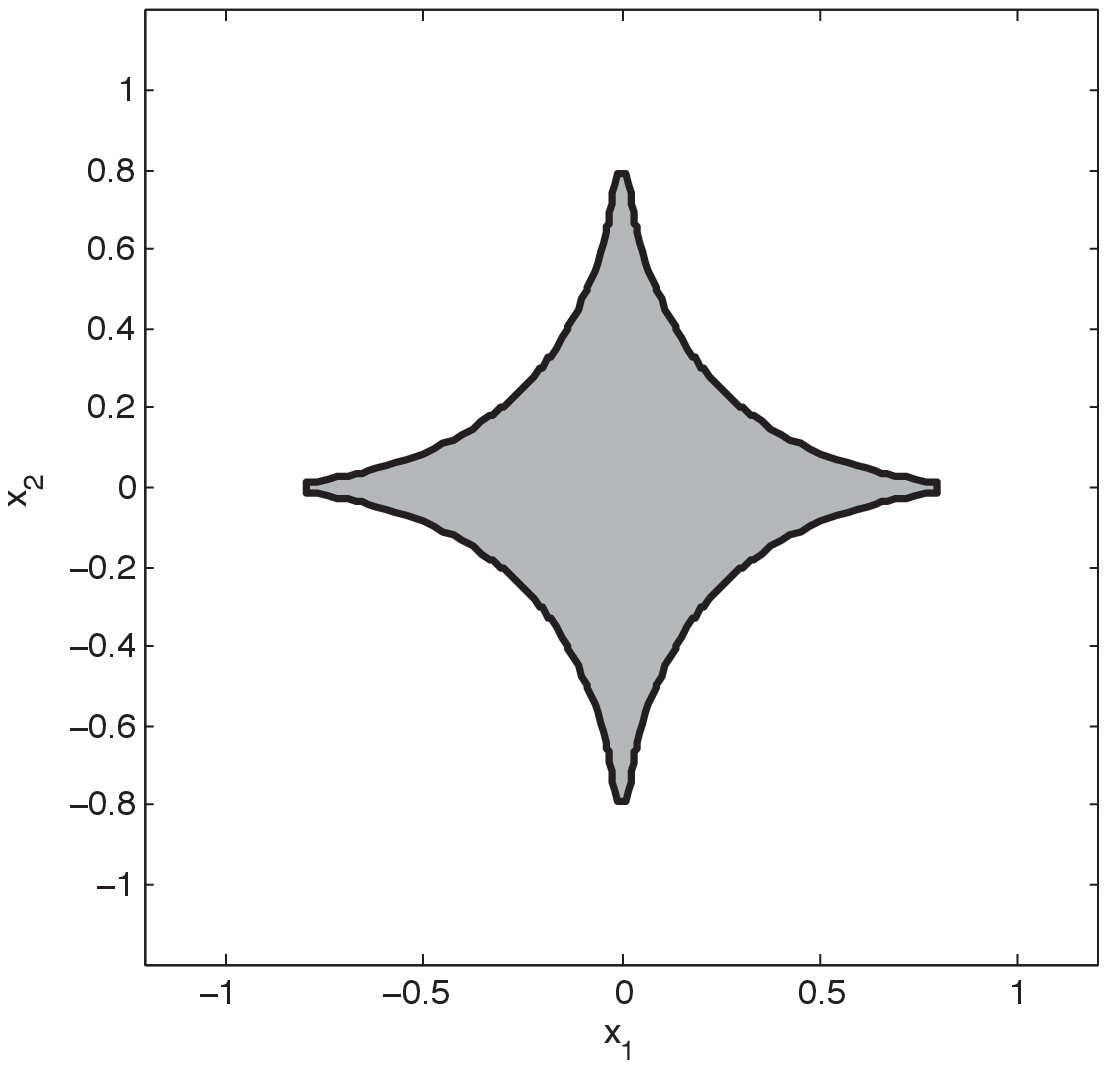}}
\caption{The $L_{1/2}$-unit ball $\{\,\x:\sqrt{\vert x_1\vert}+\sqrt{\vert x_2\vert}\leq1\,\}$ \label{fig-undemi}}
\end{figure}
\end{center} 

However, we have the following analogue of Theorem \ref{th2-ell1}

\begin{theorem}
\label{th3-ell1-q}
Let $0<q\in\N$ and $0<d\in\Z_{q/2}$. The polynomial
\begin{equation}
\label{general-gstar}
\x\mapsto g^*(\x)\,=\,\sum_{i=1}^n \vert x_i\vert ^{d}\,\end{equation}
is the unique optimal solution of Problem $\P_{1q}$ in (\ref{problem-P-33}) and
moreover,
\begin{equation}
\label{relationship-q}
{\rm vol}\,(\B_d)\,=\,\frac{2^n\Gamma(1/d)^n}{n\,d^{n-1}\,\Gamma(n/d)};
\:\displaystyle\int_{\G^*} \vert x_i\vert ^{d}\,d\x
\,=\,\frac{{\rm vol}\,(\B_d)}{n+d}\,\quad i=1,\ldots,n.
\end{equation}
\end{theorem}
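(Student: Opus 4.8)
The plan is to follow, almost verbatim, the proof of Theorem~\ref{th2-ell1}, replacing the ordinary monomials $\x^\alpha$ by the generalized monomials $\vert\x\vert^\alpha$ and the index set $\N^n_d$ by the finite set $\n^n_{dq}$. First I would establish existence of a minimizer: taking a minimizing sequence $(g_k)\subset\homq$, the bound $\Vert g_k\Vert_1\to\ell_1^*$ confines the coefficient vectors to a bounded subset of the finite-dimensional space $\R^{m(d,q)}$, so a subsequence converges coefficientwise, hence pointwise, to some $g^*\in\homq$. Applying Fatou's Lemma to the representation $f(g)=\Gamma(1+n/d)^{-1}\int_{\R^n}\exp(-g(\x))\,d\x$ from Theorem~\ref{th1} (valid here since each $g\in\homq$ is nonnegative and positively homogeneous of degree $d$) shows $f(g^*)\le\rho_d$, so the infimum is attained. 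Exactly as before, introducing slack variables $\lambda_\alpha\ge\vert g_\alpha\vert$ turns $\P_{1q}$ into a convex program for which Slater's condition holds, so the KKT conditions are both necessary and sufficient; they take the same form as in Theorem~\ref{th2-ell1} with $\partial f(g)/\partial g_\alpha=-\tfrac{n+d}{d}\int_\G\vert\x\vert^\alpha\,d\x$.

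Next I would exhibit $g^*(\x)=\sum_{i=1}^n\vert x_i\vert^d$ as a KKT point. Its coefficient vector has $g^*_\alpha=1$ exactly when $\alpha=d\,e_i$ for some $i$ and $g^*_\alpha=0$ otherwise; the hypothesis $d\in\Z_{q/2}$ guarantees $q\,d\in\Z$, so that $d\,e_i\in\n^n_{dq}$ and hence $g^*\in\homq$. Setting
\[
\theta:=\frac{d}{n+d}\Big(\int_{\G^*}\vert x_1\vert^d\,d\x\Big)^{-1},\qquad
u_\alpha:=\theta\,\frac{n+d}{d}\int_{\G^*}\vert\x\vert^\alpha\,d\x,\qquad v_\alpha:=0,\qquad \lambda_\alpha:=g^*_\alpha,
\]
and $\psi_\alpha:=1-u_\alpha$, all complementarity relations hold automatically once one checks $u_\alpha\in[0,1]$. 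Since the generalized monomials are nonnegative, $u_\alpha\ge0$ is immediate, and by symmetry of $\B_d=\G^*$ the $n$ moments $\int_{\G^*}\vert x_i\vert^d\,d\x$ coincide, so that $u_{d\,e_i}=1$, matching $\lambda_{d\,e_i}=g^*_{d\,e_i}=1$.

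The one genuinely new computation is the inequality $\psi_\alpha\ge0$, i.e.\ $u_\alpha\le1$, for every $\alpha\in\n^n_{dq}$; this is the step that singles out the $L_d$-polynomial, and I expect it to be the only real obstacle. It is resolved by the weighted arithmetic--geometric mean inequality applied pointwise: since $\sum_i\alpha_i=d$ the weights $\alpha_i/d\ge0$ sum to $1$, whence
\[
\vert\x\vert^\alpha=\prod_{i=1}^n\big(\vert x_i\vert^d\big)^{\alpha_i/d}\;\le\;\sum_{i=1}^n\frac{\alpha_i}{d}\,\vert x_i\vert^d\qquad(\x\in\R^n).
\]
Integrating over $\G^*$ and using both the equality of the moments $\int_{\G^*}\vert x_i\vert^d\,d\x$ and $\sum_i\alpha_i/d=1$ gives $\int_{\G^*}\vert\x\vert^\alpha\,d\x\le\int_{\G^*}\vert x_1\vert^d\,d\x$, that is $u_\alpha\le1$. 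This argument makes no use of convexity of $\B_d$, so it covers the range $0<d<1$ as well. Hence $(g^*,\u,\v,\psi,\blambda,\theta)$ satisfies the (sufficient) KKT conditions and $g^*$ is optimal.

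Uniqueness then follows verbatim from the strict convexity of $f$ granted by Theorem~\ref{th1}: were $h\neq g^*$ a second optimizer with $f(h)=f(g^*)=\rho_d$, strict convexity would force a convex combination $h_\lambda$ to satisfy $f(h_\lambda)<\rho_d$, and rescaling $h_\lambda$ by the factor $k<1$ with $k^{-n/d}f(h_\lambda)=\rho_d$ would yield a feasible point of strictly smaller $\ell_1$-norm, a contradiction. Finally, the two formulas in (\ref{relationship-q}) are obtained as in Theorem~\ref{th2-ell1}: the moment identity $\int_{\G^*}\vert x_i\vert^d\,d\x=\vol(\B_d)/(n+d)$ drops out of $\ell_1^*=\Vert g^*\Vert_1=n=-\theta\langle\nabla f(g^*),g^*\rangle=\theta\,\tfrac{n}{d}f(g^*)$ via Euler's identity, while the closed form for $\vol(\B_d)$ comes from evaluating $\int_{\R^n}\exp(-\sum_i\vert x_i\vert^d)\,d\x=(\tfrac{2}{d}\Gamma(1/d))^n$ in (\ref{vol-exp}), i.e.\ Lemma~\ref{lemma-appendix}.
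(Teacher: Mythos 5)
Your proposal is correct, and its overall skeleton (existence via a minimizing sequence and Fatou, the slack-variable convex reformulation, Slater plus KKT, verification that $g^*$ is a KKT point, uniqueness via strict convexity and rescaling, and the derivation of (\ref{relationship-q}) from Euler's identity and Lemma \ref{lemma-appendix}) matches the paper's. Where you genuinely diverge is at the one step the paper itself identifies as the only new content: proving
\[
\int_{\G^*}\vert\x\vert^\alpha\,d\x\,\leq\,\int_{\G^*}\vert x_1\vert^{d}\,d\x,\qquad\forall\alpha\in\n^n_{dq}.
\]
The paper obtains this by building a Hankel-type moment matrix $\M(\alpha,\beta)=\int_{\G^*}\vert\x\vert^{(\alpha+\beta)/2}\,d\x$ indexed by the half-lattice $\n^n_{d\frac{q}{2}}$ (this is why the hypothesis is $d\in\Z_{q/2}$ rather than $d\in\Z_q$) and invoking an adaptation of Lemma 4.3 of Lasserre and Netzer, which bounds all moments of a psd Hankel matrix by the largest ``pure power'' diagonal moment. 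You instead prove the \emph{pointwise} domination $\vert\x\vert^\alpha=\prod_i(\vert x_i\vert^d)^{\alpha_i/d}\leq\sum_i\frac{\alpha_i}{d}\vert x_i\vert^d$ by weighted AM--GM (the weights $\alpha_i/d\geq0$ sum to $1$ since $\vert\alpha\vert=d$) and integrate, using the symmetry of $\B_d$ to equate the $n$ moments $\int_{\G^*}\vert x_i\vert^d\,d\x$. Your route is more elementary and self-contained: it needs no moment-matrix positivity, no external lemma, and no half-index structure, so it would in fact work under the weaker hypothesis $d\in\Z_q$ (the condition $d\in\Z_{q/2}$ is then only used to ensure $d\,e_i\in\n^n_{dq}$, for which $d\in\Z_q$ already suffices); it also specializes to give a short alternative proof of the corresponding inequality in Theorem \ref{th2-ell1}. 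What the paper's approach buys is reuse of existing moment machinery that the author applies elsewhere; yours buys transparency and slightly greater generality.
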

\begin{proof}
The proof is almost a verbatim copy of that of Theorem \ref{th2-ell1} except that we now have to deal with generalized moments $\int_\G \vert\x\vert^\alpha d\x$, $\alpha\in\n^n_{dq}$ instead of standard
monomial moments $\int_\G \x^\alpha d\x$, $\alpha\in\N^n$. But the crucial fact that we exploit is that $f$ is strictly convex and Theorem \ref{th1} holds for $f$. As in the proof of Theorem \ref{th2-ell1}, to show
that $g^*$ in (\ref{general-gstar}) satisfies the KKT-optimality conditions we only need prove that
\[\int_{\G^*}\vert\x\vert^\alpha\,d\x\,\leq\, \int_{\G^*}\vert x_1\vert^{d}\,d\x,\qquad\forall\alpha\in\n^n_{dq}.\]
Define the Hankel-type moment matrix $\M$ to be the real symmetric matrix with rows
and columns indexed by $\alpha\in\n^n_{d\frac{q}{2}}$ and with entries
\[\M(\alpha,\beta)\,:=\,\int_{\G^*}\vert\x\vert^{(\alpha+\beta)/2}\,d\x,\qquad\alpha,\beta\,\in\,\n^n_{d\frac{q}{2}}.\]
Equivalently, letting $\N^n_{dq/2}=\{\beta\in\N^n:\sum_i\beta_i=dq/2\}$ and re-indexing rows and columns of $\M$ with $\tilde{\alpha}:=q\alpha/2\in\N^n_{dq/2}$,
\[\M(\tilde{\alpha},\tilde{\beta})\,:=\,\int_{\G^*}(\vert\x\vert^{1/q})^{\tilde{\alpha}+\tilde{\beta}}\,d\x\,=:\,y_{\tilde{\alpha}+\tilde{\beta}},\qquad\tilde{\alpha},\tilde{\beta}\,\in\,\N^n_{d\frac{q}{2}}.\]
Define $\y=(y_{\tilde{\alpha}})$, $\tilde{\alpha}\in\N^n_{dq}$, and $\tilde{X}=\vert\x\vert^{1/q}$.
Observe that from (\ref{help}) one may write
\[\homq\,:=\,\{\sum_{\tilde{\alpha}\in\N^n_{dq}}\,g_\alpha\,(\vert \x\vert^{\frac{1}{q}})^{\tilde{\alpha}}:\quad (g_\alpha)\in\R^{m(d,q)}\,\}.\]
Let $L_\y:\homq\to\R_+$ be the linear mapping defined by
\[g\mapsto\,L_\y(g)\,:=\,\sum_{\tilde{\alpha}\in\N^n_{dq}}g_\alpha\,y_{\tilde{\alpha}}
\,=\,\sum_{\tilde{\alpha}\in\N^n_{dq}}g_\alpha\,\int_{\G^*}\tilde{X}^{\tilde{\alpha}}\,d\x\,=\,
\int_{\G^*}g(\x)\,d\x.\]
By an adaptation of Lemma 4.3 in Lasserre and Netzer \cite{netzer} 
to the present homogeneous context one has
\[\vert y_{\tilde{\alpha}}\vert\,\leq\,\sup_{i=1,\ldots,n}\,L_\y(\tilde{X}_i^{dq})\,=\,\sup_{i=1,\ldots,n}\,
\int_{\G^*}\vert x_i\vert^{d}\,d\x\,\left(=\,\int_{\G^*}\vert x_1\vert^{d}\,d\x\right),\quad\forall\tilde{\alpha}\in\N^n_{dq}.\]
Indeed in Lemma 4.3 of \cite{netzer} one only uses the Hankel structure of the moment matrix $\M$
and its positive definiteness. Therefore for every $\alpha\in\n^n_{dq}$,
\[\int_{\G^*}\vert\x\vert^\alpha\,d\x\,=\,\int_{\G^*}\tilde{X}^{\tilde{\alpha}}\,d\x\,=\,
 y_{\tilde{\alpha}}\,\leq\,\int_{\G^*}\vert x_1\vert^{d}\,d\x,\]
 and so as in the proof of Theorem \ref{th2-ell1}, $g^*$ satisfies the KKT-optimality conditions.
\end{proof}

We obtain the following even more general extension of Theorem \ref{th2-ell1}.
\begin{cor}
\label{cor2}
For every $0<d\in\mathbb{Q}$ the generalized polynomial
\[\x\mapsto g^*(\x)\,=\,\sum_{i=1}^n \vert x_i\vert ^{d}\,\]
is the unique optimal solution of 
\[\P_1:\qquad \ell_1^*\,=\,\inf_g\:\{\,\Vert g\Vert_1: \:f(g)\,\leq\,\rho_d\,;\quad g\in\mathscr{C}_d\,\}.\]
and (\ref{relationship-q}) holds.
\end{cor}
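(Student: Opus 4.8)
I want to prove Corollary \ref{cor2}: for every rational $0<d\in\mathbb{Q}$, the generalized polynomial $g^*(\x)=\sum_{i=1}^n\vert x_i\vert^d$ is the unique minimizer of $\Vert g\Vert_1$ over $g\in\mathscr{C}_d$ subject to $f(g)\le\rho_d$, with the moment identities (\ref{relationship-q}) holding. The natural strategy is to reduce the infinite-dimensional problem over $\mathscr{C}_d$ to a nested family of finite-dimensional problems $\P_{1q}$, each of which is already handled by Theorem \ref{th3-ell1-q}, and then pass to the limit.

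**Plan.** First I would write $d=a/b$ in lowest terms and observe that $g^*(\x)=\sum_i\vert x_i\vert^d$ itself lies in $\homq$ for any $q$ that is a multiple of $2b$ (so that $d\in\Z_{q/2}$, which is exactly the hypothesis of Theorem \ref{th3-ell1-q}). Fixing any such admissible $q_0$, Theorem \ref{th3-ell1-q} already gives that $g^*$ is the \emph{unique} optimal solution of $\P_{1q_0}$ and that (\ref{relationship-q}) holds; in particular $f(g^*)=\rho_d$ and $\Vert g^*\Vert_1=n$. So the only thing left is to upgrade optimality within the fixed finite-dimensional slice $\homq$ to optimality over the whole space $\mathscr{C}_d$.

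**The key reduction.** The crucial structural point is that every admissible competitor $g\in\mathscr{C}_d$ has, by definition, only finitely many nonzero coefficients $g_\alpha$ with $\alpha\in\mathbb{Q}^n_+$, $\vert\alpha\vert=d$. Hence there is a common denominator $Q$ for all the exponents appearing in $g$ \emph{and} in $g^*$; choosing $q$ to be a suitable multiple of $Q$ and of $2b$, both $g$ and $g^*$ lie in the single finite-dimensional space $\homq$, with $d\in\Z_{q/2}$ so that Theorem \ref{th3-ell1-q} applies to that $q$. Since $g^*$ is the unique minimizer of $\P_{1q}$ and $g$ is a feasible competitor in $\P_{1q}$, we get $\Vert g\Vert_1\ge\Vert g^*\Vert_1$, with equality only if $g=g^*$. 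As $g\in\mathscr{C}_d$ was arbitrary, $g^*$ is the unique optimal solution of $\P_1$ over $\mathscr{C}_d$, and the identities (\ref{relationship-q}) are inherited verbatim from Theorem \ref{th3-ell1-q}. The argument is really a ``finite support captured by one lattice'' observation rather than a limiting argument.

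**Main obstacle.** The delicate point is the uniqueness claim under this patching: I must ensure that the $q$ chosen to contain a given competitor $g$ still satisfies the divisibility hypothesis $d\in\Z_{q/2}$ required by Theorem \ref{th3-ell1-q}, so that $g^*$ remains the \emph{unique} optimizer of that particular $\P_{1q}$ (and not merely optimal). This is handled by taking $q$ to be the least common multiple of all the exponent denominators of $g$, of the exponent denominator of $g^*$, and of $2b$; any such $q$ works. One should also note that the infimum over $\mathscr{C}_d$ is genuinely attained and equals $\rho_d$: feasibility of $g^*$ gives $\ell_1^*\le n$, and the per-slice bound just established gives $\ell_1^*\ge n$, so there is no gap and the value is consistent with every $\P_{1q}$. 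With these points in place the corollary follows without any further computation.
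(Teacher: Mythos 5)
Your proposal is correct and follows essentially the same route as the paper: reduce to a single finite-dimensional slice $\homq$ by taking $q$ to be (twice) a common multiple of the denominators of $d$ and of all exponents appearing in the competitor, then invoke the uniqueness in Theorem \ref{th3-ell1-q} for that $q$. The paper phrases this as a proof by contradiction, but the content --- the ``finite support captured by one lattice'' observation and the care with the hypothesis $d\in\Z_{q/2}$ --- is identical.
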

\begin{proof}
Let $0<d\in\mathbb{Q}$ and suppose that there exists 
$g\in\mathscr{C}_d$ such that
${\rm vol}\,(\G)=\rho_d$ and $\Vert g\Vert_1\leq n$. Write $d=p_0/q_0$
with $0<p_0,q_0\in\N$. For each non-zero coefficient
$g_\alpha$ one has $\alpha_i=p_i(\alpha)/q_i(\alpha)$ for some integers $0<p_i(\alpha),q_i(\alpha)$. Let
$q=2q'$ with $q'\in\N$ being the least common multiple (l.c.m.) of $\{q_0,(q_i(\alpha)), i=1,\ldots,n, g_\alpha\neq0\}$.
Then  $d\in\Z_{q/2}$ and $g\in\homq$. Therefore by Theorem \ref{th3-ell1-q},
$\Vert g\Vert_1>\Vert g^*\Vert_1=n$ where $g^*(\x)=\sum_{i=1}^n \vert x_i\vert^d$, in contradiction  
with our assumption $\Vert g\Vert_1\leq n$.
\end{proof}

Then again the parsimony property of the $L_d$-unit ball $\B_d$ 
can be retrieved by minimizing the $\ell_1$-norm 
over all nonnegative generalized polynomials $g\in\mathscr{C}_d$ whose associated ball
unit ball $\G$ has finite volume.\\

Next, concerning the $\ell_2$-norm, with $0<q\in\N$ an analogue of problem (\ref{problem-P-11}) now reads:
 \begin{equation}
\label{problem-P-11-q}
\P_{2q}:\qquad \ell^*_2\,=\,\inf_g\:\{\,\Vert g\Vert_2^2: \:f(g)\leq\rho_d\,;\quad g\in\homq\,\},
\end{equation}
and we have the following analogue of Theorem \ref{th2}:

 \begin{theorem}
\label{th2-q}
With $0<q\in\N$ and $0<d\in\Z_q$, Problem $\P_{2q}$ in (\ref{problem-P-11-q}) has a unique optimal solution $g^*\in\homq$
whose vector of coefficients $\g^*=(g^*_\alpha)\in\R^{m(d,q)}$
satisfies:
\begin{equation}
\label{th2-1-q}
g^*_\alpha\,=\,\ell^*_2\,\frac{n+d}{n}\cdot\frac{\displaystyle\int_{\G^*}\vert\x\vert^\alpha\,d\x}{\displaystyle\int_{\G^*}\,d\x},
\qquad \forall\,\alpha\in\n_{dq},
\end{equation}
where $\G^*=\{\x:g^*(\x)\leq1\}$ and ${\rm vol}\,(\G^*)=\rho_d$.
\end{theorem}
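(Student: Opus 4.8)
The plan is to adapt the proof of Theorem \ref{th2} almost verbatim, since the underlying structure is identical: both problems minimize a strictly convex quadratic (the weighted $\ell_2$-norm) subject to a single convex volume constraint $f(g)\leq\rho_d$, and Theorem \ref{th1} applies equally to the positively homogeneous functions in $\homq$. First I would establish existence of an optimal solution by the same compactness argument used for $\P_1$ and $\P_2$: take a minimizing sequence $(g_k)$, use that it is norm-bounded to extract a coordinatewise-convergent subsequence with limit $g^*\in\homq$ (the space being finite-dimensional, this is immediate), and invoke Fatou's Lemma together with the integral representation (\ref{vol-exp}) to conclude $f(g^*)\leq\rho_d$, so $g^*$ is feasible and optimal. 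Uniqueness then follows from the strict convexity of $f$ by the same scaling contradiction as in Theorem \ref{th2-ell1}: two distinct optimal solutions would yield, via a convex combination and homogeneity, a feasible point of strictly smaller norm.

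Next I would write down the KKT optimality conditions, which are legitimate because $\P_{2q}$ is convex and Slater's condition holds (any strictly feasible $g$, e.g. a small multiple of $g^*$, gives $f(g)<\rho_d$). Stationarity reads
\[
2\,g^*_\alpha\,=\,-\lambda^*\,\frac{\partial f(g^*)}{\partial g_\alpha},\qquad\forall\,\alpha\in\n_{dq},
\]
for some multiplier $\lambda^*\geq0$, where I emphasize that here the objective uses the \emph{unweighted} norm $\Vert g\Vert_2^2=\sum_\alpha g_\alpha^2$, so no factor $c_\alpha$ appears (in contrast to Theorem \ref{th2}). Crucially, the partial derivative formula (\ref{relation-1}) of Theorem \ref{th1} holds in this generalized setting with $\x^\alpha$ replaced by $\vert\x\vert^\alpha$, giving $\partial f(g^*)/\partial g_\alpha=-\frac{n+d}{d}\int_{\G^*}\vert\x\vert^\alpha\,d\x$.

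The remaining step is purely computational: I would multiply the stationarity equation by $g^*_\alpha$, sum over $\alpha$, and use Euler's identity $\langle\nabla f(g^*),g^*\rangle=-\tfrac{n}{d}f(g^*)$ from Theorem \ref{th1} to evaluate $\lambda^*$. This yields $2\ell^*_2=\lambda^*\,\tfrac{n}{d}\,f(g^*)=\lambda^*\,\tfrac{n}{d}\,\rho_d$, hence $\lambda^*=2d\,\ell^*_2/(n\rho_d)$; substituting back into the stationarity equation produces exactly (\ref{th2-1-q}). By homogeneity one may normalize $f(g^*)=\rho_d$, so $\int_{\G^*}d\x=\rho_d={\rm vol}(\G^*)$ as claimed.

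I do not expect a genuine obstacle, as the argument is structurally a transcription of Theorem \ref{th2}. The only point requiring care is the justification that differentiation under the integral sign is valid for the generalized monomials $\vert\x\vert^\alpha$ with rational exponents $\alpha\in\Z^n_q$, so that (\ref{relation-1}) transfers cleanly; this is already guaranteed because Theorem \ref{th1} was stated for arbitrary nonnegative positively homogeneous functions and the integrability of $\vert\x\vert^\alpha\exp(-g(\x))$ over $\R^n$ holds by the finiteness of $\vol(\G^*)$ together with (\ref{vol-alpha}). A secondary subtlety worth noting is that, unlike in Theorem \ref{th2}, there is no dual-cone interpretation of $\g^*$ as a sum of $d$-powers of linear forms, since the ambient objects are generalized polynomials rather than genuine forms; accordingly I would state only the coefficient characterization (\ref{th2-1-q}) and not attempt a Tchakaloff-type representation.
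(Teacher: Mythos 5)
Your proposal is correct and follows essentially the same route as the paper, which in fact omits the proof of Theorem \ref{th2-q} precisely because it is a verbatim adaptation of the proof of Theorem \ref{th2} (existence via Fatou, Slater plus KKT stationarity, Euler's identity to evaluate the multiplier, and uniqueness from strict convexity). Your two points of care --- dropping the $c_\alpha$ weights and omitting the Tchakaloff representation step --- are exactly the adjustments the generalized setting requires.
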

We omit the proof as it is again a verbatim copy of that of Theorem \ref{th2}.
But in contrast to the case of polynomials in Theorem \ref{th2},
in the optimal solution $g^*$ of $\P_{2q}$, all coefficients $(g^*_\alpha)$, $\alpha\in\n_{dq}$, 
are non-zero! This follows from (\ref{th2-1-q}) and the fact that 
all generalized moments
$\int_{\G^*}\vert\x\vert^\alpha\,d\x$, $\alpha\in\n_{dq}$ are non-zero! For instance with
$d=1/2$ and $q=8$,
\[0<\int_\G \vert x_1\vert^{1/2}\,d\x\,;\quad
0<\int_\G \vert x_1\vert^{1/8}\vert x_2\vert^{3/8}\,d\x\,;\quad
0<\int_\G \vert x_1\vert^{1/4}\vert x_2\vert^{1/4}\,d\x,\]
\[0<\int_\G \vert x_1\vert^{3/8}\vert x_2\vert^{1/8}\,d\x\,;\quad
0<\int_\G \vert x_2\vert^{1/2}\,d\x.\]
Hence the unique optimal solution $g^*$ of $\P_{2q}$ is not sparse at all. Even more,
with fixed $0<d\in\mathbb{Q}$, the larger is $q$ the more complicated is $g^*$! Therefore
an analogue of Corollary \ref{cor2} for the $\ell_2$-norm cannot exist.

\section{Appendix}

\begin{lem}
\label{lemma-appendix}
Let $d$ be a positive real and let $g:\R^n\to \R$ be the function:
\[\x\mapsto g(\x)\,:=\,\sum_{i=1}^n\vert x_i\vert^d\,;\quad \G:=\{\,\x\in\R^n:g(\x)\leq\,1\,\}.\]
Then:
\begin{equation}
\label{apen-1}
\int_{\G}\,d\x\,=\,\frac{2^n}{n\,d^{n-1}}\,\frac{\Gamma(1/d)^n}{\Gamma(n/d)};
\quad\int_{\G}\vert x_i\vert^d \,d\x\,=\,
\frac{2^n}{n\,(n+d)\,d^{n-1}}\,\frac{\Gamma(1/d)^n}{\Gamma(n/d)},
\end{equation}
for all $i=1,\ldots,n$.
\end{lem}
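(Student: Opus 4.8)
The plan is to use the Laplace-transform identity (\ref{vol-alpha}) of Theorem \ref{th1} to convert each integral over $\G$ into a Gaussian-type integral over all of $\R^n$; since $g(\x)=\sum_{i=1}^n\vert x_i\vert^d$ is separable, that integral factorizes into one-dimensional pieces which reduce to values of the Gamma function. Concretely, applying (\ref{vol-alpha}) with $\alpha=0$ (legitimate because $g$ is nonnegative, positively homogeneous of degree $d\neq0$, and $\vol(\G)<\infty$) gives
\[\int_{\G}d\x\,=\,\frac{1}{\Gamma(1+n/d)}\int_{\R^n}\exp\Bigl(-\sum_{i=1}^n\vert x_i\vert^d\Bigr)\,d\x
\,=\,\frac{1}{\Gamma(1+n/d)}\Bigl(\int_{\R}\exp(-\vert t\vert^d)\,dt\Bigr)^n.\]
The substitution $u=t^d$ evaluates the one-dimensional integral, and more generally
\[\int_{\R}\vert t\vert^s\exp(-\vert t\vert^d)\,dt\,=\,2\int_0^\infty t^s\exp(-t^d)\,dt\,=\,\frac{2}{d}\,\Gamma\Bigl(\frac{s+1}{d}\Bigr),\qquad s\geq0.\]
Taking $s=0$ yields $\tfrac2d\Gamma(1/d)$, and using $\Gamma(1+n/d)=\tfrac nd\Gamma(n/d)$ collapses the prefactor to produce the first stated formula.

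For the second integral I would apply the same argument to the integrand $\vert x_i\vert^d$, which is positively homogeneous of degree $d$; the proof of Theorem \ref{th1} uses only this homogeneity (so that $y\mapsto\int_{\{g\leq y\}}\vert x_i\vert^d\,d\x$ scales as $y^{(n+d)/d}$), giving
\[\int_{\G}\vert x_i\vert^d\,d\x\,=\,\frac{1}{\Gamma\bigl(1+(n+d)/d\bigr)}\int_{\R^n}\vert x_i\vert^d\exp(-g(\x))\,d\x.\]
The right-hand integral again factorizes, into $n-1$ factors equal to $\tfrac2d\Gamma(1/d)$ and one factor (the case $s=d$ above) equal to $\tfrac2d\Gamma(1+1/d)=\tfrac{2}{d^2}\Gamma(1/d)$. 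Collapsing the Gamma prefactor through $\Gamma(2+n/d)=\tfrac{n(n+d)}{d^2}\Gamma(n/d)$ then yields the second formula, which by symmetry of $g$ is the same for every $i$.

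I expect no genuine obstacle: the lemma is a bookkeeping computation. The only two points deserving a word of justification are, first, that (\ref{vol-alpha}) extends from monomials $\x^\alpha$ to the non-polynomial integrand $\vert x_i\vert^d$ --- which is immediate since the Laplace-transform proof of Theorem \ref{th1} invokes only positive homogeneity of the integrand --- and, second, the repeated use of $\Gamma(z+1)=z\Gamma(z)$ to turn $\Gamma(1+n/d)$ and $\Gamma(2+n/d)$ into the explicit factors $n\,d^{n-1}$ and $n(n+d)\,d^{n-1}$. As a consistency check, the second expression equals $\tfrac{1}{n+d}$ times the first, matching relation (\ref{relation-2}) together with the symmetry $\int_\G g\,d\x=n\int_\G\vert x_1\vert^d\,d\x$.
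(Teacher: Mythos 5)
Your proposal is correct and follows essentially the same route as the paper: apply the Laplace-transform identity (\ref{vol-alpha}) (with prefactors $\Gamma(1+n/d)$ and $\Gamma(1+(n+d)/d)$ respectively), exploit separability of $\exp(-\sum_i\vert x_i\vert^d)$ to factor into one-dimensional integrals, and collapse the Gamma functions via $\Gamma(z+1)=z\Gamma(z)$. The only cosmetic difference is that you evaluate $\int_\R\vert t\vert^s\exp(-\vert t\vert^d)\,dt$ by the direct substitution $u=t^d$, whereas the paper re-invokes the one-dimensional case of Theorem \ref{th1} to write it as $\Gamma(1+(s+1)/d)\int_{\vert t\vert^d\leq1}\vert t\vert^s\,dt$; both give $\tfrac{2}{d}\Gamma((s+1)/d)$.
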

\begin{proof}
The function $g$ is positively homogeneous of degree $d$.
Observe that by Theorem \ref{th1}, $\displaystyle\int_{\G}d\x=\frac{1}{\Gamma(1+n/d)}\displaystyle\int_{\R^n}\exp(-g(\x))\,d\x$, and
again by Theorem \ref{th1},
\begin{eqnarray*}
\int_{\R^n}\exp(-g(\x))\,d\x&=&\left(\int_\R\exp(-\vert t\vert ^d)\,dt\right)^n\,=\,\left(\Gamma(1+\frac{1}{d})\int_{\vert t\vert^d\leq1}dt\right)^n
=\left(\frac{2\Gamma(1/d)}{d}\right)^n
\end{eqnarray*}
where we have used the identity $x\Gamma(x)=\Gamma(1+x)$. This yields the result in the 
left of (\ref{apen-1}). Similarly, for every $i=1,\ldots,n$,
\begin{eqnarray*}
\int_{\R^n}\vert x_i\vert^d\,\exp(-g(\x))\,d\x&=&\left(\int_\R \vert t\vert^d\,\exp(-\vert t\vert^d)\,dt\right)\,\left(\int_\R\exp(-\vert t\vert^d)\,dt\right)^{n-1}\\
&=&\left(\Gamma(1+\frac{d+1}{d})\int_{\vert t\vert^d\leq1}\vert t\vert ^d \,dt\right)\left(\Gamma(1+\frac{1}{d})\int_{\vert t\vert^d\leq1}dt\right)^{n-1}\\
&=&\left(\frac{2}{d}\Gamma((d+1)/d)\right)\left(\frac{2}{d}\Gamma(1/d)\right)^{n-1}\\
&=&\frac{2^n}{d^{n+1}}\Gamma(1/d)^n.
\end{eqnarray*}
Therefore,
\[\int_{\G}\vert x_i\vert^d \,d\x\,=\,\frac{1}{\Gamma(1+\frac{n+d}{d})}\,\int_{\R^n}\vert x_i\vert^{d}\exp(-g(\x))\,d\x
\,=\,\frac{2^n}{n\,(n+d)\,d^{n-1}}\,\frac{\Gamma(1/d)^n}{\Gamma(n/d)}.\]
\end{proof}


\begin{thebibliography}{las}
\bibitem{anastassiou}
G.A. Anastassiou; {\em Moments in Probability and Approximation Theory}, Longman Scientific \& Technical, UK, 1993.
\bibitem{bach}
F. Bach, R. Jenatton, J. Mairal, G. Obosinski. Optimization with sparsity-induced penalties, Foundations and Trends in Machine Learning {\bf 4}, issue {\bf 1}, 2013.
\bibitem{beck}
A. Beck, Y. Eldar. Sparsity constrained non linear optimization: Optimality conditions and algorithms,
{\it SIAM J. Optim.} {\bf 23}, pp. 1480--1509, 2013.
\bibitem{candes}
E. Cand\`es, J. Romberg, T. Tao. Robust uncertainty principles: Exact signal reconstruction from highly incomplete frequency information, {\it IEEE Trans. Inform. Theory} {\bf 52},  pp. 489--509, 2006.
\bibitem{donoho1}
D. L. Donoho. Compressed sensing, {\it IEEE Trans. Inform. Theory} {\bf 52}, pp. 1289--1306, 2006.
\bibitem{donoho2}
D. L. Donoho, M. Elad. Optimally sparse representation in general (non-orthogonal)
dictionaries via $\ell_1$ minimization, {\it Proc. Natl. Acad. Sci.} {\bf 100}, pp. 2197--2202, 2003.
\bibitem{freitag}
E. Freitag and R. Busam. {\em Complex Analysis}, Second Edition, Springer-Verlag, Berlin, 2009.
\bibitem{hilbert}
D. Hilbert, S. Cohn-Vossen. {\it Geometry and the Imagination},
and Edition, Chelsea, 1952.
\bibitem{kemperman}
J.H.B. Kemperman. The general moment problem, a geometric approach,
{\it Annals Math. Stat.} {\bf 39} (1968), pp. 93--122.
\bibitem{lowner}
J.B. Lasserre. A generalization of L\"owner-John's ellipsoid theorem, {\it Math. Program}, to appear.
{\tt arXiv:1302.1056}
\bibitem{monthly}
J.B. Lasserre. A quick proof for the volume of $n$-balls.
{\it Amer. Math. Monthly} {\bf 108}, pp. 768--769, 2001. 
\bibitem{netzer}
J.B. Lasserre and T. Netzer. SOS approximations of nonnegative polynomials via simple high degree perturbations
{\it Math. Z.} {\bf 256}, pp. 99--112, 2007. 
\bibitem{applicationes}
J.B. Lasserre and E.S. Zeron. Solving a class of multivariable integration problems via Laplace Lechniques,
{\it Appl. Math. (Warsaw)} {\bf  28}, pp. 391--405, 2001.
\bibitem{recht}
B. Recht, M. Fazel, P. A. Parrilo. Guaranteed Minimum-Rank Solutions of Linear Matrix Equations via Nuclear Norm Minimization, {\it SIAM Review} {\bf 52}, pp. 471--501, 2010.
\bibitem{reznick}
B. Reznick. Sums of even powers of real linear forms, {\it Mem. Amer. Math. Soc.} {\bf  96},
No. 463, 1992.
\end{thebibliography}
\end{document}